\title{On Bayesian Consistency for Flows Observed Through a Passive Scalar}
\author{Jeff Borggaard, Nathan E. Glatt-Holtz, Justin A. Krometis\\
  \scriptsize{emails: jborggaard@vt.edu, negh@tulane.edu, jkrometis@vt.edu}}
\date{}
\definecolor{Red}{rgb}{0.7,0,0.1}
\definecolor{Green}{rgb}{0,0.7,0}
\numberwithin{equation}{section}
\newcommand{\R}{\mathbb{R}}           
\newcommand{\Z}{\mathbb{Z}}           
\newcommand{\Ito}{It\^{o}~}
\DeclarePairedDelimiterX\innerp[2]{\langle}{\rangle}{#1,#2}
\newcommand{\ip}[2]{ \innerp{#1}{#2} }
\newcommand{\norm}[1]{\left\lVert#1\right\rVert}
\NewDocumentCommand\Exp{g}{%
    \mathbb{E}\IfNoValueTF{#1}{}{\left[ #1 \right]}%
}
\NewDocumentCommand\Prob{g}{%
    \mathbb{P}\IfNoValueTF{#1}{}{\left[ #1 \right]}%
}
\newcommand{\x}{\mathbf{x}}
\newcommand{\kbf}{\mathbf{k}}
\newcommand{\spatdom}{\mathbb{T}^2}
\newcommand{\conductivity}{\kappa}
\newcommand{\pdesol}{\theta}
\newcommand{\vfieldnobf}{v}
\newcommand{\vfield}{\mathbf{\vfieldnobf}}
\newcommand{\vk}{\vfieldnobf_{\kbf}}
\newcommand{\vtrue}{\vfield^{\star}}
\newcommand{\vfspace}{H}
\newcommand{\bfU}{\mathbf{u}}
\newcommand{\Xmet}{\rho}
\newcommand{\G}{\mathcal{G}}
\newcommand{\Obs}{\mathcal{O}}
\newcommand{\Sol}{\mathcal{S}}
\newcommand{\data}{\mathcal{Y}}
\newcommand{\dataspace}{\R^N}
\newcommand{\noise}{\eta}
\newcommand{\noisemeasure}{\gamma_0}
\newcommand{\indicator}{\mathbbm{1}}
\newcommand{\vfspacea}{\vfspace}
\newcommand{\vfspaceb}{V}
\newcommand{\pairSolSp}{L^2\left([0,T]\times \spatdom \right)^2}
\newcommand{\pairSol}{\tilde{\Sol}}
\newcommand{\Vdel}{\mathcal{X}_{\delta}}
\newcommand{\Vdelt}{\mathcal{X}_{\frac{\delta}{2}}}
\newcommand{\E}{\mathbb{E}}
\newtheorem{Thm}{Theorem}[section]
\newtheorem{Lem}[Thm]{Lemma}
\newtheorem{Prop}[Thm]{Proposition}
\newtheorem{Cor}[Thm]{Corollary}
\newtheorem{Def}[Thm]{Definition}
\newtheorem{Rmk}[Thm]{Remark}
\newtheorem{Not}[Thm]{Notation}
\begin{document}

\markboth{J. Borggaard, N. Glatt-Holtz, J. Krometis}
{On Bayesian Consistency for Flows Observed Through a Passive Scalar}

\maketitle

\begin{abstract}
  We consider the statistical inverse problem of estimating a background fluid flow field $\vfield$ from the partial, noisy observations of the concentration $\pdesol$ of a substance passively advected by the fluid, so that $\pdesol$ is governed by the partial differential equation
  \begin{equation*}
    \frac{\partial}{\partial t}{\pdesol}(t,\x) = -\vfield(\x) \cdot \nabla \pdesol(t,\x) + \conductivity \Delta \pdesol(t,\x) 
    \quad \text{ , } \quad \pdesol(0,\x) = \pdesol_0(\x)
  \end{equation*}
  for $t \in [0,T], T>0$ and $\x \in \spatdom=[0,1]^2$. The initial condition $\pdesol_0$ and diffusion coefficient $\conductivity$ are assumed to be known and the data consist of point observations of the scalar field $\pdesol$ corrupted by additive, i.i.d. Gaussian noise. We adopt a Bayesian approach to this estimation problem and establish that the inference is consistent, i.e., that the posterior measure identifies the true background flow as the number of scalar observations grows large. Since the inverse map is ill-defined for some classes of problems even for perfect, infinite measurements of $\pdesol$, multiple experiments (initial conditions) are required to resolve the true fluid flow. Under this assumption, suitable conditions on the observation points, and given support and tail conditions on the prior measure, we show that the posterior measure converges to a Dirac measure centered on the true flow as the number of observations goes to infinity.
\end{abstract}

{\noindent \small {\it \bf MSC 2000 subject classifications:} Primary-62G20; Secondary-65N21, 76R05.}

{\noindent \small {\it \bf Keywords:} Bayesian Statistical Inversion,
  Bayesian Consistency, Passive
  Scalars}

\setcounter{tocdepth}{1}
\tableofcontents

\newpage

\section{Introduction}
\label{sec:introduction}\label{intro}

In this work we consider the inverse problem of estimating a background
fluid flow from partial, noisy observations of a dye, pollutant, or
other solute advecting and diffusing within the fluid. The physical model considered is the two-dimensional
advection-diffusion equation on the periodic domain
$\spatdom=[0,1]^2$:
\begin{equation} \label{eq:adr}
  \frac{\partial}{\partial t}{\pdesol}(t,\x) = -\vfield(\x) \cdot \nabla \pdesol(t,\x) + \conductivity \Delta \pdesol(t,\x) 
  \quad \text{ , } \quad \pdesol(0,\x) = \pdesol_0(\x).
\end{equation}
Here 
\begin{itemize}
\item $\pdesol: \R^{+} \times \spatdom \to \R$ is a \emph{passive
    scalar}, typically the concentration of some solute of interest,
  which is spread by diffusion and by the motion of a
  (time-stationary) fluid flow $\vfield$. This solute is ``passive''
  in that it does not affect the motion of the underlying fluid.
\item $\vfield: \spatdom \to \R^2$ is an \emph{incompressible
    background flow}, i.e., $\vfield$ is constant in time and
  satisfies $\nabla \cdot \vfield = 0$.
\item $\conductivity>0$ is the \emph{diffusion coefficient}, which
  models the rate at which local concentrations of the solute spread
  out within the solvent in the absence of advection.
\end{itemize}

We obtain finite observations $\data \in \dataspace$ subject to additive noise $\noise$, i.e.
\begin{equation} \label{eq:Gmap}
	\data = \G(\vfield)+\noise
  \quad \text{ , } \quad \noise \sim \noisemeasure
\end{equation}
for some measure $\noisemeasure$ related to the precision of the
observations. Here, the forward map $\G: \vfspace \to \dataspace$ associates the
background flow $\vfield$, sitting in a suitable function space
$\vfspace$, with a finite collection of measurements (observables) of
the resulting solution $\pdesol=\pdesol(\vfield)$ of \eqref{eq:adr}. In this work, we are primarily interested in spatial-temporal point observations:
\begin{equation}
  \G_{j}(\vfield) := \pdesol(t_j, \x_j,\vfield), \quad \text{ for any }t_j \in [0,T] 
  \text{ and } \x_j \in [0,1]^2.
\end{equation}

The goal of the inverse problem is then to estimate the flow $\vfield$ from data $\data$. The initial condition is assumed to be known, so the problem can be interpreted as
a controlled experiment, where the solute is added at known locations
and then observed as the system evolves to investigate the structure
of the underlying flow. This is a common experimental approach to investigating complex fluid flows; see, for example, 
\cite{karch2012dye,kellay2002two,merzkirch1987flow,smits2012flow}.

As we will illustrate, the inverse problem is ill-posed, i.e., the flow $\vfield$ is not 
uniquely defined by the scalar field $\pdesol$; that the observations of $\pdesol$ are both finite-dimensional and polluted by noise exacerbates this problem. We therefore adopt a Bayesian approach to 
regularize the inverse problem, as described for this problem in our companion work \cite{borggaard2018bayesian} (see also \cite{krometis2018bayesian}) and in a more general setting in, e.g., \cite{kaipio2005statistical,stuart2010inverse,dashti2017bayesian}. A key component of this approach is the selection of a prior probability measure on the space of divergence-free flows, $\vfspace$. It is then natural to ask to what extent the result of the inference depends on the choice of prior, and in particular whether the Bayesian approach to the inverse problem is \emph{consistent}: That is, under what conditions does the posterior measure concentrate on the true fluid flow as the number of observations $N$ of $\pdesol$ grows large?

In this work, we establish conditions under which the Bayesian inference of $\vfield$ given data \eqref{eq:Gmap} is consistent for i.i.d.~observational noise $\noise=\left( \noise_1, \dots, \noise_N \right), \noise_j \sim N(0,\sigma_{\noise}^2)$. We then prove that the posterior measure converges weakly to a Dirac measure centered on the true background flow as the number of scalar observations $N$ grows large; see \cref{sec:ass:proof} for a full statement of the assumptions and the key result. Here it is a nontrivial task to determine suitable conditions on the structure of the observed data and on the prior measure for which consistency would be expected to hold. As such, as a crucial starting point for the analysis of consistency, one must address difficult experimental design questions. 

In our problem, even under the noiseless and complete measurement of $\pdesol$, essential symmetries can
prevent the recovery of $\vfield$. For example, a poor choice of $\pdesol_0$ in \eqref{eq:adr} makes it impossible to distinguish between
(an infinite class of) laminar flows, so multiple experiments (initial conditions) are required to guarantee resolution of the true background flow. A second useful structural condition is that, by picking spatial-temporal observation points at random,
we can ensure a sufficiently complete recovery of the solution $\pdesol$ as the number of observation points grows. Thirdly, it is worth emphasizing that we require special conditions on the prior measure. Crucially, we identify a tail condition that ensures that flows are sufficiently smooth -- that is, the prior turns out to be critical to the result by restricting consideration to flows of limited roughness (up to a region of low probability). 

An important outcome of this experimental design is that it allows us to use compactness to effectively constrain the space of possible divergence-free velocity fields. Indeed, compactness plays an important role in two components of the consistency proof. First, we use it to show the continuity of the inverse map from $\pdesol$ to $\vfield$ (see \cref{sec:Sinv_continuous}). Second, we use it to develop a suitable uniform version of the law of large numbers in order to show that noisy observations can differentiate between the true and other scalar fields (\cref{sec:pot:concentration}).

Consistency of Bayesian estimators has been of interest since at least Laplace \cite{laplace1810memoire}, with rigorous proofs of
convergence for some problems appearing in the mid-twentieth century
\cite{doob1949application,lecam1953some}. The works \cite{freedman1963asymptotic,schwartz1965bayes,diaconis1986consistency} identified infinite-dimensional examples where Bayesian estimators are \emph{not} consistent -- that is, there are cases where the data can never guarantee recovery of the true parameter value. See, e.g., \cite{wasserman1998asymptotic}, \cite{le2000asymptotics}, or \cite{nickl2013statistical} for a more detailed description on the history of consistency and the main ideas. 

In recent years, there has been interest in extending these consistency results to infinite-dimensional inverse problems, and in particular those constrained by PDEs. Our result is one of the first on consistency in this context. Recent work in this area includes \cite{vollmer2013posterior}, which used an elliptic PDE as the guiding example, and \cite{nickl2017bernsteini}, which establishes a Bernstein-von Mises theorem -- consistency, but also contraction rates in the form of a Gaussian approximation -- for Bayesian estimation of parameters of the time-independent Schr\"{o}dinger equation.

It is worth noting that the related inverse problem of estimating the drift function $b$ from partial observations $\left\{ X_1, \dots, X_N \right\}$ of the \Ito diffusion
\begin{equation}
  dX_t = b(X_t) dt + \sigma(X_t) dW_t, \quad t>0
  \label{eq:diffusion}
\end{equation}
has been studied extensively; see, e.g., \cite{papaspiliopoulos2012nonparametric} or \cite{gugushvili2014nonparametric}. Consistency has been established in various forms for this problem; see \cite{van2013consistent,koskela2015consistency,nickl2017nonparametric,abraham2018nonparametric}. However, while the equations \eqref{eq:adr} and \eqref{eq:diffusion} are related by the Kolmogorov equations (see, e.g., \cite[Chapter 8]{oksendal2013stochastic}), the observed data are different: Observations of an individual diffusion provide an approximate measurement of the drift, whereas observations of the concentration $\pdesol$ are less direct -- movement of individual particles must be inferred. Our consistency proof therefore, while retaining some similarities with other such arguments, requires an original approach with different assumptions.

The remainder of the paper is organized as follows. \cref{sec:math_framework} describes the mathematical framework of the inverse problem and why it is ill-posed in the traditional sense. The main result and key assumptions are stated in \cref{sec:ass:proof}. Continuity of the inverse map is shown in \cref{sec:Sinv_continuous}. Uniform convergence of the log-likelihood is shown in \cref{sec:pot:concentration}. Convergence of the posterior to the inverse image of the true scalar field is shown in \cref{sec:C:true:scalar}. Finally, the proof of the main result is provided in \cref{sec:cons:conclusion}. Energy estimates for the advection-diffusion problem used to show continuity of the forward and inverse maps are reserved for \cref{sec:energyest}. 

\section{Preliminaries}
\label{sec:math_framework}

In this section, we describe the mathematical framework of the inverse
problem (\ref{eq:Gmap}). We begin by defining the functional analytic
setting for the problem, including how we represent divergence-free
background flows. We then define the inverse problem, key notation, and Bayes' Theorem 
for this application.

\subsection{Representation of Divergence-Free Background Flows}
\label{sec:modelDivFree}

The target of the inference is a divergence-free background flow
$\vfield$, so we start by describing the space $\vfspace$ of such
flows that we consider.  For this purpose we begin by recalling
the Sobolev spaces of (scalar valued) periodic functions on the 
domain $\spatdom = [0,1]^2$
\begin{equation}
  \begin{aligned}
    H^s(\spatdom) 
    &= \left\{ u : u = \sum_{\kbf \in \Z^2 \setminus \{\mathbf{0}\}} c_{\kbf} e^{2 \pi i \kbf \cdot \x }, 
    \, \overline{c_{\kbf}} = c_{-\kbf}, \, \| u\|_{H^s} < \infty \right\}, \\
    &\qquad \text{ where }  \| u\|_{H^s}^2 :=  \sum_{\kbf \in \Z^2} \norm{\kbf}^{2s} |c_{\kbf}|^2,
  \end{aligned}
    \label{eq:Hs}
 \end{equation}
 defined for any $s \in \mathbb{R}$; see
 e.g.~\cite{robinson2001infinite, temam1995navier}.  We will abuse
 notation and use the same notation for periodic divergence-free
 background flows by replacing the coefficients $c_{\kbf}$ in
 \eqref{eq:Hs} as
\begin{equation}
  c_{\kbf} = \vk \frac{\kbf^\perp}{\norm{\kbf}_2}, 
  \quad \overline{\vk}=-\vfieldnobf_{-\kbf},
  \label{eq:reality}
\end{equation}
where for $\kbf = (k_1,k_2)$ we set $\kbf^\perp = (-k_2, k_1)$ to ensure
$\kbf \cdot \kbf^\perp = 0$.  Throughout the rest of the paper we fix our parameter space as follows:
\begin{Not}[Parameter space, $\vfspace$]
  We consider background flows $\vfield \in \vfspace$, where
  $\vfspace$ is the Sobolev space (see \eqref{eq:Hs}),
  \begin{equation}
    \vfspace = H^{m}(\spatdom),  \quad \text{for some $m > 1$}
    \label{eq:vfspace}
  \end{equation}
  with coefficients $c_{\kbf}$ given by \eqref{eq:reality}.
  \label{def:vfspace}
\end{Not}
Here the exponent $m$ is chosen so that vector fields in $\vfspace$,
  as well as their corresponding solutions $\pdesol(\vfield)$, exhibit
  continuity properties convenient for our analysis below (see
  \cref{thm:th_continuous} below).  We take $L^p(\spatdom)$ with
  $p \in [1,\infty]$ for the usual Lebesgue spaces and denote the
  space of continuous and $p$-th integrable, $X$-valued functions by
  $C([0,T];X)$ and $L^p([0,T]; X)$, respectively, for a given Banach space $X$.  All
  of these spaces are endowed with their standard topologies unless
  otherwise specified.

\subsection{Mathematical Setting of the Advection-Diffusion Problem}\label{sec:math_setting}
In this section, we provide a precise definition of solutions
$\pdesol$ for the advection-diffusion problem \eqref{eq:adr}.
Crucially the setting we choose yields a map from $\vfield$ to
$\pdesol$ and then to observations of $\pdesol$ that is continuous. 

\begin{Prop}[Well-Posedness and Continuity of the solution map for
  \eqref{eq:adr}]\label{def:adr_weak}
  \mbox{}
\begin{itemize}
  \item[(i)]
  Fix any $s \geq 0$ and $m \geq s$ with $m > 0$ and suppose that
  $\vfield \in H^{m}(\spatdom)$ and $\pdesol_0 \in H^{s}(\spatdom)$.
  Then there exists a unique $\pdesol = \pdesol(\vfield, \pdesol_0)$
  such that
  \begin{align*}
    \pdesol &\in L^2_{loc}([0,\infty); H^{s+1}(\spatdom)) 
        \cap L^\infty([0,\infty); H^{s}(\spatdom)) \\
    &\quad \text{ with } \quad
    \frac{\partial \pdesol}{\partial t} 
    \in L^2_{loc}([0,\infty); H^{s-1}(\spatdom))
  \end{align*}
  so that in particular
  \begin{align*}
  \pdesol \in C([0,\infty); H^{s}(\spatdom))
  \end{align*}
  solves~(\ref{eq:adr}) at least weakly.  In other words, $\pdesol$ satisfies
    \begin{equation}
      \ip{\frac{\partial\pdesol}{\partial t}{}}{\phi}_{H^{-1}\left( \spatdom \right) \times H^1(\spatdom)}  
      + \ip{\vfield \cdot \nabla \pdesol}{\phi}_{L^2\left( \spatdom \right)} 
      + \conductivity \ip{\nabla \pdesol}{\nabla \phi}_{L^2\left( \spatdom \right)} = 0
    \label{eq:adr_weak}
  \end{equation}
  for all $\phi \in H^1(\spatdom)$ and almost all times
  $t \in [0,\infty)$.  
  \item[(ii)]
  For any $T > 0$ the map that associates $\vfield \in H^m(\spatdom)$ and
  $\pdesol_0 \in H^s(\spatdom)$ to the corresponding
  $\pdesol(\vfield, \pdesol_0)$ is continuous relative to the standard
  topologies on $H^m(\spatdom) \times H^s(\spatdom)$ and
  $C\left( [0,T]; H^s(\spatdom) \right)$.
\end{itemize}
\end{Prop}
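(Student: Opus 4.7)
For part (i), since the problem is linear and posed on the periodic domain $\spatdom$, the natural approach is Galerkin approximation on the Fourier basis $\{e^{2\pi i \kbf \cdot \x}\}_{\kbf \in \Z^2}$. Truncating at $|\kbf| \leq N$ reduces \eqref{eq:adr} to a linear ODE for the Fourier coefficients of $\pdesol^N$, which is globally well-posed. The heart of the argument is a uniform-in-$N$ $H^s$ energy estimate: testing the projected equation against $(-\Delta)^s \pdesol^N$ (equivalently, weighting Fourier modes by $|\kbf|^{2s}$), using the incompressibility of $\vfield$ to cancel/absorb the transport contribution against the dissipation, and invoking bilinear Sobolev product estimates (which I would appeal to \cref{sec:energyest} for), one obtains
\begin{equation*}
  \sup_{N} \left( \norm{\pdesol^N}_{L^\infty([0,T];H^s)}^2 + \conductivity \norm{\pdesol^N}_{L^2([0,T];H^{s+1})}^2 \right) \leq C\left(T, \conductivity, \norm{\vfield}_{H^m}\right) \norm{\pdesol_0}_{H^s}^2.
\end{equation*}
The bound $\partial_t \pdesol^N \in L^2([0,T]; H^{s-1})$ then follows by rearranging the equation and reading off each term. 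One extracts a weakly / weak-$*$ convergent subsequence, passes to the limit in \eqref{eq:adr_weak} (using Aubin--Lions compactness to pass to the limit in the transport term), and applies the classical embedding $L^2([0,T]; H^{s+1}) \cap H^1([0,T]; H^{s-1}) \hookrightarrow C([0,T]; H^s)$ to conclude time-continuity. Uniqueness follows from the same energy estimate applied to the difference of two solutions, combined with Gr\"onwall's inequality.

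For part (ii), fix $T > 0$ and take sequences $\vfield_n \to \vfield$ in $H^m(\spatdom)$ and $\pdesol_0^{(n)} \to \pdesol_0$ in $H^s(\spatdom)$. Setting $w_n = \pdesol(\vfield_n, \pdesol_0^{(n)}) - \pdesol(\vfield, \pdesol_0)$, subtraction yields the perturbed equation
\begin{equation*}
  \partial_t w_n + \vfield_n \cdot \nabla w_n - \conductivity \Delta w_n = -(\vfield_n - \vfield) \cdot \nabla \pdesol(\vfield,\pdesol_0), \qquad w_n(0) = \pdesol_0^{(n)} - \pdesol_0.
\end{equation*}
Running the same $H^s$ energy estimate as in part (i) on this equation -- absorbing the $\vfield_n \cdot \nabla w_n$ term via the divergence-free condition on $\vfield_n$ and Young's inequality against the dissipation, while bounding the right-hand side in terms of $\norm{\vfield_n - \vfield}_{H^m} \norm{\pdesol(\vfield,\pdesol_0)}_{H^{s+1}}$ -- and then applying Gr\"onwall together with the part (i) integrability $\pdesol(\vfield,\pdesol_0) \in L^2([0,T]; H^{s+1})$, yields $\norm{w_n}_{L^\infty([0,T]; H^s)} \to 0$ along with a corresponding bound on $\norm{w_n}_{L^2([0,T]; H^{s+1})}$ and $\norm{\partial_t w_n}_{L^2([0,T]; H^{s-1})}$. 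The same embedding invoked in (i) then upgrades the convergence to $C([0,T]; H^s)$.

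The principal technical obstacle is the $H^s$ energy estimate itself for general $s \geq 0$: it amounts to controlling the commutator $[(-\Delta)^{s/2}, \vfield \cdot \nabla]$ acting on $\pdesol$ through bilinear Sobolev (Kato--Ponce / Moser-type) inequalities. The constraint $m \geq s$ with $m > 0$ is precisely what allows these product estimates to close, and the relevant Sobolev embeddings enter most cleanly once $m > 1$. Since these calculations are exactly what \cref{sec:energyest} is reserved for, the plan is to cite the energy lemmas there as a black box and devote the body of the proof to the Galerkin limit and the perturbation argument outlined above.
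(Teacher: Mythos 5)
Your plan is essentially the paper's own argument: the paper also proves this by energy methods justified through a Galerkin approximation, establishing the uniform $H^{s}$ estimates via the Sobolev embedding, interpolation, and the Kato--Ponce inequality, obtaining the $\partial_t\pdesol$ bound by duality, and proving uniqueness and part (ii) by running the same estimate on the difference equation with Gr\"onwall's inequality (these computations are exactly what \cref{sec:energyest} supplies). Your additional remarks on the limit passage (Aubin--Lions, the embedding $L^2([0,T];H^{s+1})\cap H^1([0,T];H^{s-1})\hookrightarrow C([0,T];H^{s})$) are standard details the paper leaves implicit, so the two proofs coincide in substance.
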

\noindent This result can be proven using energy methods; similar
results can be found for example in~\cite{evans2010partial,
  lieberman1996second}.  In the case of a smooth solution where $s > 3$
one may also establish \cref{def:adr_weak} using particle methods as
in e.g.~\cite{oksendal2013stochastic} by observing that \eqref{eq:adr}
is the Kolmogorov equation corresponding to a stochastic differential
equation with the drift given by $\vfield$; see
\cite{krometis2018bayesian} for details in our setting.  For
completeness, we provide the {\em a priori} estimates leading to
\cref{def:adr_weak} in \cref{sec:energyest}.

\begin{Def}[Solution Operator $\Sol$, Observation Operator $\Obs$]
  Fix $\pdesol_0 \in H^s(\spatdom)$ and a time $T > 0$ and consider the phase
  space $\vfspace$ defined as (\ref{eq:vfspace}).  The forward map $\G$ as
  in \eqref{eq:Gmap} is interpreted as the composition
  $\G(\vfield) = \Obs \circ \Sol(\vfield)$, where:
  \begin{enumerate}
  \item The \emph{solution operator}
    $\Sol:\vfspace \to C([0,T]; H^s(\spatdom))$ maps a
    given $\vfield$ to the corresponding solution $\pdesol(\vfield, \pdesol_0)$ 
    of \eqref{eq:adr} (in the sense of \cref{def:adr_weak}).
  \item The \emph{observation operator}
    $\Obs: C([0,T]; H^s(\spatdom) ) \to \dataspace$ measures point observations $\Obs(\pdesol) = (\Obs_1(\pdesol), \ldots,$ $\Obs_N(\pdesol))$ defined by $\Obs_j(\pdesol) = \pdesol(t_j, \x_j)$ for $t_j \in [0,T]$ and
      $\x_j \in [0,1]^2$.
  \end{enumerate}
  \label{def:sol}
  \label{def:obs}
\end{Def}
We now note assumptions on $\vfield$ and $\pdesol_0$ under which these observations are well-defined and vary continuously
with $\vfield$.
\begin{Cor}[Continuity of $\pdesol$]
  Let $\vfield \in \vfspace$ with associated exponent $m>1$ (see \eqref{eq:vfspace}) and let 
  $\pdesol_0 \in H^s$, for $m \geq s>1$. 
  Recalling that $H^s(\spatdom), s>1$ embeds continuously in $C(\spatdom)$ in dimension $2$
  (see e.g.~\cite{robinson2001infinite}, Theorem A.1) we have that
  $C([0,T]; H^s) \subset C\left( [0,T] \times \spatdom \right)$ again with the
  embedding continuous.  Thus, with \cref{def:adr_weak}, we have that
  \begin{equation*}
    \Sol: \vfspace \to C\left( [0,T] \times \spatdom \right)
  \end{equation*}
  continuously. In particular this justifies that $\G$ is well defined
  and continuous in the case of point observations as in
  \cref{def:obs}. 
  \label{thm:th_continuous}
  \label{thm:S_continuous_L2}
  \label{thm:pt_obs_continuous}
\end{Cor}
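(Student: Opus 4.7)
The plan is to derive this continuity statement by chaining together the continuity provided by \cref{def:adr_weak}(ii) with the Sobolev embedding $H^s(\spatdom)\hookrightarrow C(\spatdom)$, and then to observe that point evaluation is a continuous linear functional on $C([0,T]\times \spatdom)$.

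First I would verify in detail the intermediate claim that $C([0,T];H^s(\spatdom)) \hookrightarrow C([0,T]\times \spatdom)$ continuously whenever $s>1$. The norm bound is immediate: for $u\in C([0,T];H^s)$,
\begin{equation*}
  \sup_{(t,\x)\in [0,T]\times \spatdom} |u(t,\x)| \le \sup_{t\in[0,T]}\|u(t,\cdot)\|_{C(\spatdom)} \le C_s \sup_{t\in[0,T]} \|u(t,\cdot)\|_{H^s(\spatdom)},
\end{equation*}
where $C_s$ is the embedding constant from $H^s(\spatdom)\hookrightarrow C(\spatdom)$. To upgrade this to joint continuity of $u$ in $(t,\x)$, I would use the splitting
\begin{equation*}
  |u(t,\x)-u(t_0,\x_0)| \le |u(t,\x)-u(t_0,\x)| + |u(t_0,\x)-u(t_0,\x_0)|,
\end{equation*}
controlling the first term by $C_s\|u(t,\cdot)-u(t_0,\cdot)\|_{H^s(\spatdom)}\to 0$ as $t\to t_0$ (using that $u\in C([0,T];H^s)$), and the second by the continuity of $u(t_0,\cdot)\in C(\spatdom)$. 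This shows $u\in C([0,T]\times \spatdom)$.

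Next I would compose with \cref{def:adr_weak}(ii), which asserts that $\vfield \mapsto \pdesol(\vfield,\pdesol_0)$ is continuous from $H^m(\spatdom)$ into $C([0,T];H^s(\spatdom))$ under the hypothesis $m\ge s$ (which is available here since $m\ge s>1$). Composing this continuous map with the continuous embedding established above gives at once
\begin{equation*}
  \Sol:\vfspace \longrightarrow C([0,T]\times \spatdom)\quad \text{continuously},
\end{equation*}
which is the main assertion of the corollary.

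Finally, for the consequence about $\G$, I would observe that for each fixed $(t_j,\x_j)\in[0,T]\times \spatdom$ the evaluation functional $\Obs_j:C([0,T]\times \spatdom)\to \R$, $\Obs_j(\pdesol)=\pdesol(t_j,\x_j)$, satisfies $|\Obs_j(\pdesol)|\le \|\pdesol\|_{C([0,T]\times \spatdom)}$, hence is a bounded (in particular continuous) linear functional. Therefore $\Obs=(\Obs_1,\ldots,\Obs_N):C([0,T]\times \spatdom)\to \dataspace$ is continuous, and $\G=\Obs\circ \Sol$ is a composition of continuous maps, yielding both well-posedness and continuity of $\G$. There is no genuine obstacle in this argument; the only point that requires slight care is the passage from separate continuity in $t$ (valued in $H^s$) and in $\x$ (for each fixed $t$) to joint continuity in $(t,\x)$, which is handled by the triangle inequality above together with the $H^s\hookrightarrow C(\spatdom)$ embedding.
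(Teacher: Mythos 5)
Your proposal is correct and follows essentially the same route as the paper: compose the continuity of $\vfield \mapsto \pdesol$ into $C([0,T];H^s)$ from \cref{def:adr_weak}(ii) with the Sobolev embedding $H^s(\spatdom)\hookrightarrow C(\spatdom)$ to get $\Sol:\vfspace\to C([0,T]\times\spatdom)$ continuous, then use continuity of point evaluation. The only difference is that you spell out the triangle-inequality argument for the joint continuity in $(t,\x)$, which the paper simply asserts as part of the embedding $C([0,T];H^s)\subset C([0,T]\times\spatdom)$.
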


\subsection{Bayesian Setting of the Inverse Problem}
In this subsection, we define the setting of the statistical inverse problem and note cases where the inverse map is ill-posed.  This will inform the assumptions required for the consistency argument. We close with a definition of Bayes' theorem for this problem. 

We begin by fixing some notation used
in the remainder of the paper. 

\begin{Def}[$\vtrue$, $\data$, $\G$, $\noise$]
  We frequently fix a ``true'' background flow by $\vtrue \in \vfspace$.  For the given
  $\vtrue$, the observed data $\data$ is given by
  \begin{align*}
    \data = \G(\vtrue) + \noise,
  \end{align*}
  where 
  \begin{itemize}
    \item The forward map $\G:\vfspace \to \R^N$ with $\G_{j}(\vfield) := \pdesol(t_j, \x_j;\vfield)$ corresponding to the observation point $(t_j,\x_j) \in [0,T] \times [0,1]^2$.
    \item The observational noise $\noise = \left\{ \noise_1, \dots, \noise_N \right\} \in \R^N$ for i.i.d. $\noise_j \sim N\left(0,\sigma_{\noise}^2\right)$.
  \end{itemize}
  \label{def:vtrue}
\end{Def}

We emphasize, however, that $\vtrue$ is not necessarily the only $\vfield$ that could produce such data, as we describe in the next remark.

\begin{Rmk}
  \label{rmk:Illposs}
  Since the background flow $\vfield$ enters \eqref{eq:adr} through
  the $\vfield \cdot \nabla \pdesol$ term, the inverse problem of
  recovering $\vfield$ from $\pdesol(\vfield)$ can be ill-posed.  One
  important class of examples illustrating this difficulty arises
  when $\vfield \cdot \nabla \pdesol$ is zero everywhere, in which
  case the fluid flow does not have any effect on $\pdesol$. Two such
  examples are as follows:
\begin{itemize}
\item[(i)] \underline{Ill-posedness: Laminar Flow:}
 Let $\pdesol_0(\x)$ be independent of $x_2$ and
    $\vtrue = \left( 0,f(x_1) \right)$. Then
    $\pdesol(\vtrue)=\pdesol(\vfield)$ for any
    $\vfield=\left( 0,g(x_1) \right)$.
  \item[(ii)] \underline{Ill-posedness: Radial Symmetry:} Set
    $\pdesol_0(\x)\propto \sin (\pi x_1) + \sin(\pi x_2)$ and
    $\vtrue = (\cos(\pi x_2), -\cos(\pi x_1))$. Then
    $\pdesol(\vtrue)=\pdesol(\vfield)$ for any
    $\vfield = c \vtrue, \,c \in \R$.
\end{itemize}
\noindent In these cases, even noiseless and complete spatial/temporal
observations of $\pdesol$ have no way to discriminate between a range
of background flows, making it impossible to uniquely identify a true
background flow $\vtrue$ in general.
\end{Rmk}

We have following adaptation of Bayes' Theorem to the
advection-diffusion problem; see the derivation in \cite[Appendix C]{borggaard2018bayesian} or \cite{dashti2017bayesian} for additional information.
\begin{Thm}[Bayes' Theorem]
	\label{thm:Bayes:AD}
  Fix a prior distribution $\mu_0 \in \mbox{Pr}(\vfspace)$ and let forward maps $\G_j$, data $\data_j$, and associated i.i.d. observational noise $\noise_j \sim N(0,\sigma_{\noise}^2)$ be as defined in \cref{def:vtrue}. Then the \emph{posterior} measure $\mu_\data$ associated with the random variable $\vfield | \data$ is absolutely continuous with respect to $\mu_0$ and given by
  \begin{equation}
      \mu_\data(d\vfield) 
      = \frac{1}{Z_{\data}} 
      \exp \left[ -\frac{1}{2\sigma_\noise^2} \sum_{j=1}^N 
        \left(\data_j-\G_j\left(\vfield\right) \right)^2\right] 
      \mu_0(d\vfield) 
    \label{eq:bayes}
  \end{equation}
  where $Z_{\data}$ is the normalization
  \begin{equation} \label{eq:bayes_gt0_cond}
    Z_{\data} = \int_{\vfspacea} 
    \exp \left[ -\frac{1}{2\sigma_\noise^2} 
    \sum_{j=1}^N \left(\data_j-\G_j\left(\vfield\right) \right)^2\right] 
       \mu_0(d\vfield).
  \end{equation}
\end{Thm}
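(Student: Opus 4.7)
The plan is to follow the standard Bayesian conditioning argument for inverse problems with additive Gaussian noise and a Borel measurable forward map (as in \cite{dashti2017bayesian}), adapted to our passive-scalar setting. First I would introduce the potential (negative log-likelihood)
\begin{equation*}
\Phi(\vfield; \data) := \frac{1}{2\sigma_\noise^2}\sum_{j=1}^N \left(\data_j - \G_j(\vfield)\right)^2,
\end{equation*}
and observe that \cref{thm:pt_obs_continuous} gives continuity of $\G: \vfspace \to \dataspace$, so $\Phi$ is jointly continuous on $\vfspace \times \dataspace$ and hence jointly Borel measurable.

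Next I would build the joint law of $(\vfield, \data)$ on $\vfspace \times \dataspace$, which by \cref{def:vtrue} is specified by $\vfield \sim \mu_0$ together with $\data \mid \vfield \sim N(\G(\vfield), \sigma_\noise^2 I)$. Writing out the Gaussian density of $\data - \G(\vfield)$ shows that this joint measure $\nu$ admits the density
\begin{equation*}
\frac{d\nu}{d(\mu_0 \otimes d\data)}(\vfield, \data) = \frac{1}{(2\pi\sigma_\noise^2)^{N/2}} \exp\left(-\Phi(\vfield; \data)\right)
\end{equation*}
with respect to $\mu_0$ tensored with Lebesgue measure on $\dataspace$. Integrating in $\vfield$ then produces, up to the same Gaussian prefactor, the marginal density of $\data$ proportional to $Z_\data$ as in \eqref{eq:bayes_gt0_cond}. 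Since $0 \leq e^{-\Phi} \leq 1$ and $\mu_0$ is a probability measure, finiteness $Z_\data \in [0,1]$ is automatic.

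The one substantive verification is strict positivity of $Z_\data$; without this the conditional density in \eqref{eq:bayes} is ill-defined on a set of positive marginal mass. Because $\vfspace = H^m(\spatdom)$ is a separable Banach space, the prior $\mu_0$ has nonempty closed support. Picking any $\vfield_0$ in the support, joint continuity of $\Phi$ supplies an open neighborhood $U \ni \vfield_0$ on which $\Phi(\cdot; \data) \leq M_\data < \infty$, and $\mu_0(U) > 0$ by definition of support, yielding $Z_\data \geq e^{-M_\data}\mu_0(U) > 0$ for every $\data \in \dataspace$.

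Finally, with the joint law represented by a jointly measurable density and a strictly positive marginal for $\data$, I would invoke the standard disintegration / Bayes formula to identify the regular conditional law of $\vfield$ given $\data$ as the ratio of the two densities, i.e.\ $\mu_\data(d\vfield) = Z_\data^{-1} e^{-\Phi(\vfield;\data)}\mu_0(d\vfield)$, which is precisely \eqref{eq:bayes}. The main obstacle is really the disintegration step itself, since $\vfspace$ is infinite dimensional; however, separability of $\vfspace$ makes the Borel spaces involved Polish, so the construction of regular conditional probabilities proceeds exactly as in the companion derivation in \cite[Appendix C]{borggaard2018bayesian} and the general framework of \cite{dashti2017bayesian}, leaving only bookkeeping once the continuity statement of \cref{thm:pt_obs_continuous} is in hand.
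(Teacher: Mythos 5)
Your proposal is correct and is essentially the argument the paper relies on: the paper does not prove \cref{thm:Bayes:AD} in the text but defers to the standard derivation in \cite[Appendix C]{borggaard2018bayesian} and \cite{dashti2017bayesian}, which proceeds exactly as you describe (continuity/measurability of $\G$ via \cref{thm:pt_obs_continuous}, construction of the joint law with the Gaussian likelihood, strict positivity of $Z_\data$, and disintegration to obtain the regular conditional measure). Your positivity step could even be shortened, since $\Phi(\vfield;\data)$ is finite for every $\vfield$, so $e^{-\Phi}>0$ everywhere and $Z_\data>0$ follows directly for the probability measure $\mu_0$, but the support-based bound you give is also fine.
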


\section{Statement of the Main Result}
\label{sec:ass:proof}

With the mathematical preliminaries in \cref{sec:math_framework} in hand, we are now ready to provide a precise formulation of the main result of the paper. Referring back to \cref{rmk:Illposs} we do not expect consistency to hold
without delicate assumptions on the initial conditions in
(\ref{eq:adr}) and on the observation points in our forward function
$\G$ in (\ref{eq:Gmap}).  Moreover our result relies on the selection
of an appropriate prior $\mu_0$.  In particular this $\mu_0$ should
distinguish the regularity of the `true' background flow $\vtrue$ 
for which we assume there is greater degree of spatial smoothness
than for generic elements in the ambient parameter space $\vfspace$ (although a slight generalization is described in \cref{rmk:alt:assumption}). We therefore define
an additional smaller space used throughout.
\begin{Def}[Higher Regularity Space]
  Define the space
  \begin{equation}
    \vfspaceb = H^{m^\star}\!( \spatdom ), \quad m^\star > m,
    \label{eq:vfspaceb}
  \end{equation}
  where $m$ is the exponent associated with the parameter space
  $\vfspace$ defined according to \eqref{eq:vfspace}. We denote 
  $\| \cdot \|_V$ for the associated norm and take
  \begin{equation}
    B_{\vfspaceb}^r(\vfield_0) 
    = \left\{ \vfield \in \vfspaceb : \norm{ \vfield - \vfield_0 }_\vfspaceb \le r \right\}.
    \label{eq:bvfspaceb}
  \end{equation}
  \label{def:vspaces}
  i.e.~the ball about $\vfield_0 \in \vfspaceb$ of radius $r>0$ in the $\vfspaceb$-norm.
\end{Def}

Our main result is as follows
\begin{Thm}[Convergence of Posterior to a Dirac]
  \label{thm:post:conv}
  Let $\left\{(t_j,\x_j)\right\}_{j=1}^\infty$ be a sequence of
  observation points that we assume are i.i.d.~uniform random variables in $[0,T] \times \spatdom$.
  Fix any $\pdesol_0^{(1)},\pdesol_0^{(2)} \in H^m$, with $m > 1$
  determined from \eqref{eq:vfspace}, such that
  \begin{equation}
    \left( \nabla \pdesol_0^{(1)}(\x) \right)^{\perp} \cdot \nabla \pdesol_0^{(2)}(\x) \ne 0,
    \quad \text{for almost all  } \x \in \spatdom.
    \label{eq:assum:multi_ic}
  \end{equation}
  Define the parameter-to-observable (forward) maps $\G_j$ for
  $\left\{(t_j,\x_j)\right\}_{j=1}^\infty$ and the initial conditions
  $\pdesol_0^{(1)},\pdesol_0^{(2)}$ by
  \begin{equation}
    \label{eq:for:m:consist}
    \begin{aligned}
      \G_{2j-1}\left( \vfield \right) &:=
           \pdesol(t_{j},\x_{j},\vfield, \pdesol_0^{(1)})\\
      \G_{2j}\left( \vfield \right) &:=
           \pdesol(t_{j},\x_{j},\vfield, \pdesol_0^{(2)})\\
    \end{aligned}
  \end{equation}
  for $j = 1, 2, \dots$.
  As in \cref{def:vtrue}, we fix any $\vtrue \in \vfspaceb$
  and draw data points $\left\{ \data_j \right\}_{j=1}^\infty$, where
  \begin{align}
    \data_j = \G_j\left(\vtrue\right) + \noise_j
    \label{eq:data:draw:rand}
  \end{align}
  for i.i.d.~observational noises
  $\noise_j \sim N\left(0,\sigma_\noise^2\right)$ that are
  independent of the observation points
  $\left\{(t_j,\x_j)\right\}_{j=1}^\infty$.

  Fix a prior distribution $\mu_0 \in \mbox{Pr}(\vfspace)$ and 
  for $N \geq 1$ observations, let $\mu_\data^N$ be the Bayesian
  posterior measure on $\vfspacea$, given by (cf. \cref{thm:Bayes:AD})
  \begin{align}
    \mu_\data^N(d\vfield) 
    = \frac{1}{Z^N_{\data}} 
    \exp \left[ -\frac{1}{2\sigma_\noise^2} \sum_{j=1}^N 
      \left(\data_j-\G_j\left(\vfield\right) \right)^2\right] 
    \mu_0(d\vfield) 
    \label{eq:rand:post:measure}
  \end{align}
  where $Z^N_{\data}$ is the normalization
  \begin{align*}
    Z^N_{\data} = \int_{\vfspacea} 
    \exp \left[ -\frac{1}{2\sigma_\noise^2} 
    \sum_{j=1}^N \left(\data_j-\G_j\left(\vfield\right) \right)^2\right] 
       \mu_0(d\vfield).
  \end{align*}
  Suppose that 
  \begin{align}
    \text{ for any $r > 0$, $\mu_0\left( B_\vfspaceb^{r}\left( \vtrue \right) \right)>0$.}
    \label{eq:strng:spt:cond}
  \end{align}
  Additionally assume that there exists an $f:\R^+ \to \R^+$ such that $f$ is
  monotone increasing with $\lim_{r \to \infty} f(r) = \infty$ and
      \begin{align}
        \sup_N \int_\vfspacea f\left( \norm{\vfield}_\vfspaceb \right) 
        \mu_\data^N(d\vfield) < \infty \quad a.s.
          \label{eq:assum:prior_tail_2}
      \end{align}
  Then $\mu_\data^N \rightharpoonup \delta_{\vtrue}$ (weakly in $\vfspace$) almost surely.
  In other words, on a set of full measure,
\begin{align}
  \int_{\vfspace} \phi( \vfield) \mu^N_\data(d \vfield) \to 
  \phi(\vtrue) 
  \quad \text{as $N \to \infty$ for any $\phi \in C_b(\vfspace)$}.
  \label{eq:weak:conv:def}
\end{align}
\end{Thm}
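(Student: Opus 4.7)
The plan is to combine a Laplace-type concentration argument with a uniform law of large numbers and an identifiability step. Introducing the centered negative log-likelihood
\begin{equation*}
  \Psi_N(\vfield) := \frac{1}{2\sigma_\noise^2 N} \sum_{j=1}^N \bigl[ (\data_j - \G_j(\vfield))^2 - (\data_j - \G_j(\vtrue))^2 \bigr],
\end{equation*}
the posterior \eqref{eq:rand:post:measure} can be rewritten as
\begin{equation*}
  \mu_\data^N(d\vfield) = \frac{e^{-N \Psi_N(\vfield)}}{\int_{\vfspace} e^{-N \Psi_N(\vfield')} \mu_0(d\vfield')} \mu_0(d\vfield).
\end{equation*}
Expanding $\data_j = \G_j(\vtrue) + \noise_j$ and applying the strong law of large numbers to the resulting quadratic and cross terms (the randomness coming both from the uniform observation points and the i.i.d.~Gaussian noises) one anticipates $\Psi_N(\vfield) \to \Psi(\vfield)$ almost surely, with
\begin{equation*}
  \Psi(\vfield) := \frac{1}{4\sigma_\noise^2 T} \sum_{i=1}^2 \int_0^T \int_{\spatdom} \bigl(\pdesol(t,\x,\vfield,\pdesol_0^{(i)}) - \pdesol(t,\x,\vtrue,\pdesol_0^{(i)})\bigr)^2 d\x\, dt.
\end{equation*}
I would then lean on \cref{sec:pot:concentration} to promote this pointwise convergence to uniform convergence on $\vfspaceb$-bounded subsets of $\vfspace$, and on \cref{sec:Sinv_continuous} to establish both continuity of $\Psi$ on $\vfspace$ and injectivity of the solution map under \eqref{eq:assum:multi_ic}, so that $\Psi(\vfield) = 0$ if and only if $\vfield = \vtrue$.

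With these two inputs I would proceed in three steps. First, I establish tightness of $\{\mu_\data^N\}$ in $\vfspace$: since $m^\star > m$ implies that $\vfspaceb \hookrightarrow \vfspace$ compactly, the tail hypothesis \eqref{eq:assum:prior_tail_2} together with Markov's inequality shows that for any $\epsilon > 0$ there exists $R > 0$ with $\mu_\data^N\bigl(\vfspace \setminus B_\vfspaceb^R(\vtrue)\bigr) < \epsilon$ uniformly in $N$, almost surely. Second, I establish a lower bound on the partition function of the centered posterior by restricting to a small $\vfspaceb$-ball about $\vtrue$: the support condition \eqref{eq:strng:spt:cond} gives $\mu_0(B_\vfspaceb^r(\vtrue)) > 0$ for every $r > 0$, and continuity of $\Psi$ at $\vtrue$ (where it vanishes) together with uniform convergence $\Psi_N \to \Psi$ on $\vfspaceb$-balls allows one to pick $r > 0$ small and $N$ large enough that $\Psi_N \le \epsilon$ on $B_\vfspaceb^r(\vtrue)$, yielding
\begin{equation*}
  \int_{\vfspace} e^{-N \Psi_N(\vfield)} \mu_0(d\vfield) \ge e^{-N\epsilon}\, \mu_0\bigl(B_\vfspaceb^r(\vtrue)\bigr) \quad \text{a.s.}
\end{equation*}

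Third, for an arbitrary open neighborhood $U \subset \vfspace$ of $\vtrue$, I split $U^c = \bigl(U^c \cap B_\vfspaceb^R(\vtrue)\bigr) \cup \bigl(\vfspace \setminus B_\vfspaceb^R(\vtrue)\bigr)$. The second piece contributes at most $\epsilon$ by tightness. On the first piece, $U^c \cap B_\vfspaceb^R(\vtrue)$ is closed in $\vfspace$ and $\vfspaceb$-bounded, hence compact in $\vfspace$ by the Sobolev embedding; continuity and positivity of $\Psi$ away from $\vtrue$ then give $\inf_{U^c \cap B_\vfspaceb^R(\vtrue)} \Psi \ge c$ for some $c > 0$, and the uniform convergence from \cref{sec:pot:concentration} upgrades this to $\inf \Psi_N \ge c/2$ for $N$ sufficiently large. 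Combining with the denominator lower bound and choosing $\epsilon < c/4$ produces
\begin{equation*}
  \mu_\data^N\bigl(U^c \cap B_\vfspaceb^R(\vtrue)\bigr) \le \frac{e^{-Nc/2}}{e^{-N\epsilon}\, \mu_0(B_\vfspaceb^r(\vtrue))} \longrightarrow 0,
\end{equation*}
so that $\mu_\data^N(U^c) \to 0$ almost surely for every open $U \ni \vtrue$, which together with tightness gives $\mu_\data^N \rightharpoonup \delta_{\vtrue}$ almost surely and hence \eqref{eq:weak:conv:def}.

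The principal obstacle is the input driving Step 3: the almost-sure uniform convergence of the random functional $\Psi_N$ on $\vfspaceb$-balls, which is really a uniform strong law of large numbers indexed by $\vfield \in B_\vfspaceb^R(\vtrue)$. Controlling this requires both the compact embedding $\vfspaceb \hookrightarrow \vfspace$ (so that the family $\Psi_N$ is equicontinuous in $\vfield$ via the continuity of $\Sol$ from \cref{thm:th_continuous}) and a careful treatment of the cross term coupling Gaussian noise with solution differences. A secondary technical point is the identifiability conclusion for $\Psi$: one must combine \eqref{eq:assum:multi_ic} with the injectivity analysis of \cref{sec:Sinv_continuous} to rule out spurious minimizers arising from the symmetries identified in \cref{rmk:Illposs}.
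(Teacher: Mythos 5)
Your proposal is correct and follows essentially the same route as the paper: the same uniform law of large numbers on compact $\vfspaceb$-balls, the same Rellich--Kondrachov compactness and injectivity of the paired solution map under \eqref{eq:assum:multi_ic}, and the same use of \eqref{eq:strng:spt:cond} and \eqref{eq:assum:prior_tail_2} in a numerator/denominator ratio bound for the posterior mass. The only difference is organizational: you lower-bound the limiting potential directly on the compact set $U^c \cap B_\vfspaceb^R(\vtrue)$ via the extreme value theorem, whereas the paper factors the same compactness argument through concentration on the sets $\Vdel$ (\cref{thm:consistency}) and continuity of $\pairSol^{-1}$ on compacts (\cref{thm:Sinv_continuous2}).
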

\begin{Rmk}[Support of the prior]
  We note that the assumption \eqref{eq:strng:spt:cond} is a classic assumption in posterior consistency, cf. \cite[Chapter 6]{ghosal2017fundamentals}; if the prior ``rules out'' the true flow, the posterior cannot recover it. 
\end{Rmk}

\begin{Rmk}[Equivalent prior tail condition]
  \label{rmk:prior:eq:cond}
  Condition \eqref{eq:assum:prior_tail_2} is equivalent to the assumption that for all $\epsilon > 0$ there exists an $R$ such that
  \begin{align}
      \sup_N \mu_\data^N\left( \left( B_\vfspaceb^{R}\left(\mathbf{0}\right)
        \right)^c \right) < \epsilon.
     \label{eq:assum:prior_tail_3}
  \end{align}
  To establish that \eqref{eq:assum:prior_tail_2} implies \eqref{eq:assum:prior_tail_3}, let $\epsilon > 0$ and choose $R$ such that
  \begin{align*}
    \sup_N \int_\vfspacea f\left( \norm{\vfield}_\vfspaceb \right) \mu_\data^N(d\vfield) 
    < \epsilon f(R).
  \end{align*}
  Then for any $N \geq 1$, Markov's inequality yields
  \begin{align*}
      \mu_\data^N\left( \left( B_\vfspaceb^{R}\left(\mathbf{0}\right)
        \right)^c \right) = \int_{\left(
          B_\vfspaceb^{R}\left(\mathbf{0}\right) \right)^c}
      \mu_\data^N(d\vfield) \le \frac{1}{f(R)} \int_{\vfspacea}
      f(\norm{\vfield}_\vfspaceb) \mu_\data^N(d\vfield) < \epsilon.
  \end{align*}
  Thus, \eqref{eq:assum:prior_tail_2} implies \eqref{eq:assum:prior_tail_3}. For the converse direction, use \eqref{eq:assum:prior_tail_3} to select an increasing sequence $\{R_j\}_{j=1}^\infty$ such that
  \begin{equation*}
    \sup_N \mu_\data^N\left(\left(B_\vfspaceb^{R_j}(0)\right)^c\right) < \frac{1}{4^{j}}.
  \end{equation*}
  Now define $f(r) = \sum_{j=1}^\infty 2^{j} \indicator_{r \in [R_{j},R_{j+1})}$. Then
  \begin{align*}
    \sup_N \Exp_{\mu_{\data}^N} f(\norm{\vfield}_\vfspaceb) 
    &= \sup_N \Exp_{\mu_{\data}^N} \sum_{j=1}^\infty 2^{j} \indicator_{\norm{\vfield}_\vfspaceb \in [R_{j},R_{j+1})} 
    \le \sup_N \Exp_{\mu_{\data}^N} \sum_{j=1}^\infty 2^{j} \indicator_{\vfield \in \left(B_\vfspaceb^{R_j}(0)\right)^c}  \\
    &= \sum_{j=1}^\infty 2^{j} \sup_N \mu_{\data}^N\left((B_\vfspaceb^{R_j}(0))^c\right) 
    < \sum_{j=1}^\infty 2^{j} \frac{1}{4^{j}} 
    = \sum_{j=1}^\infty \frac{1}{2^{j}} 
    < \infty
  \end{align*}
  so that \eqref{eq:assum:prior_tail_3} implies \eqref{eq:assum:prior_tail_2}. 
\end{Rmk}
\begin{Rmk}[Sufficient conditions on the prior]
  \label{rmk:prior:const}
  Suppose that
  \begin{align}
    \mu_0\left( B_\vfspaceb^r\left( \mathbf{0} \right) \right) = 1,
    \label{eq:assum:prior_tail_1}
  \end{align}
  for some $r>0$.  In this case, \eqref{eq:assum:prior_tail_3} and therefore \eqref{eq:assum:prior_tail_2} (see \cref{rmk:prior:eq:cond}) are clearly satisfied. Thus we can guarantee the existence
  of a class of non-trivial priors such that \cref{thm:post:conv}
  holds.  On the other hand the reverse implication is not to be
  expected to hold and thus the general significance of
  \eqref{eq:assum:prior_tail_2} for the admissible classes of $\mu_0$
  is not immediately clear.  In particular $\mu_0$ having bounded
  support is a strong restriction and indeed we conjecture that
  there is a class of Gaussian measures on $V$ such that
  \eqref{eq:assum:prior_tail_2} still holds.  We will investigate this
  question in future work.
\end{Rmk}

\begin{Rmk}[Poincar\'e inequality, support of $\mu_0$]
  \label{rmk:PC:Supp}
  Since we are assuming that elements in $\vfspace$ are mean-free (see \eqref{eq:Hs})
  we have the Poincar\'e-type inequality
  \begin{align}
    \|\vfield\|_\vfspace \leq C \|\vfield\|_\vfspaceb,
    \label{eq:Poincare}
  \end{align}
  for a constant $C$ independent of $\vfield$.  As such, for any $\epsilon > 0$,
  $B_\vfspaceb^{\epsilon} \subset B_\vfspace^{C\epsilon}$ where $C$ is the constant
  appearing in \eqref{eq:Poincare}.  In particular under the 
  condition \eqref{eq:strng:spt:cond} in \cref{thm:post:conv}
  we have that $\vtrue \in \mbox{supp}(\mu_0)
  = \{ \vfield \in \vfspace: \mu_0(B_\vfspace^r(\vtrue)) > 0, \text{ for all } r > 0\}$.
\end{Rmk}

\begin{Rmk}[Restrictions on the initial conditions]
  \label{rmk:IC:reasons}
  It unavoidable that that we impose a condition such as
  (\ref{eq:assum:multi_ic}) on the initial data in
  \cref{thm:post:conv}.  In \cref{rmk:Illposs} we provide two examples
  where the observations have no way to discriminate between a range
  of background flows.  In these two examples as well as many other
  classes of initial conditions, the posterior fails to concentrate on
  $\vtrue$ as the number of observations $N \to \infty$ (except for
  very particular priors).  An interesting question for future
  work is to characterize the support of the limiting measure for the
  analogue of $\mu^N_\data$ as $N \to \infty$ as a function of a
  single initial condition $\pdesol_0$.
\end{Rmk}

\begin{Rmk}[The role of $T$]
  \label{rmk:time}
  It is worth noting that any $T > 0$ suffices for consistency. This is because the two initial conditions have been chosen so that any $\vfield$ will have an immediate effect on at least one of $\pdesol^{(1)}, \pdesol^{(2)}$. As a result, we need only observe the evolution of the two systems for some non-zero time interval to identify the effect of $\vtrue$ (as the number of observations grows large). The question of how $T>0$ (and hence the placement of observation points) affects the rate at which $\mu_\data^N$ converges to $\delta_{\vtrue}$ is a much more delicate question for future work.
\end{Rmk}

\begin{Rmk}[Case where $\vtrue \not \in \vfspaceb$]
  \label{rmk:alt:assumption}
  As pointed out by a helpful reviewer, \cref{thm:post:conv} can be extended to the case where $\vtrue \not\in \vfspaceb$ by replacing $B_\vfspaceb^{r}\left( \vtrue \right)$ with $\vtrue + B_\vfspaceb^{r}\left( 0 \right)$ throughout the proof, as long as the assumption \eqref{eq:assum:prior_tail_2} is appropriately recentered on $\vtrue$, i.e.
  \begin{align*}
    \sup_N \int_\vfspacea f\left( \norm{\vfield-\vtrue}_\vfspaceb \right) 
    \mu_\data^N(d\vfield) < \infty \quad a.s.
  \end{align*}
  or, as in \cref{rmk:prior:eq:cond}, for all $\epsilon > 0$ there exists $R$ such that
  \begin{align*}
    \sup_N \mu_\data^N((\vtrue+B_V^R(0))^c) < \epsilon.
  \end{align*}
  That is, the prior needs to help rule out flows that are far from $\vtrue$ as measured by $\norm{\cdot}_\vfspaceb$.
\end{Rmk}

Before turning to the technical details let us provide an overview of
the method of the proof of \cref{thm:post:conv}.  Our starting point
is based on two basic observations.  Firstly, according to
Portmanteau's Theorem, condition \eqref{eq:weak:conv:def}
can be established with the equivalent condition that
\begin{align}
  \liminf_{N \geq 1} \mu^N_\data( B_H^\epsilon(\vtrue)) \geq 1
  \label{eq:conv:suff:cond}
\end{align}
for any $\epsilon > 0$.  See e.g.~\cite{billingsley2013convergence}
for further details on such generalities concerning the weak 
convergence of probability measures.

Our second observation concerns using the law of large numbers
to identify the approximate character of
the potential terms in \eqref{eq:rand:post:measure} for large
$N$. Referring back to \eqref{eq:data:draw:rand} and \eqref{eq:rand:post:measure}, we have
\begin{align*}
 \left(\data_j-\G_j\left(\vfield\right) \right)^2
  = \eta_j^2 + 2\eta_j (\G_j(\vtrue) -\G_j(\vfield )) + \left(\G_j(\vtrue) -\G_j(\vfield ) \right)^2.
\end{align*}
Invoking the law of large numbers, using assumed statistical properties 
of $\{\eta_j\}_{j \geq 1}$ and $\{(t_j,x_j)\}_{j \geq 1}$, we have
\begin{align}
  \frac{1}{N}\sum_{j =1}^N &\left(\data_j-\G_j\left(\vfield\right) \right)^2
  \approx  \sigma_\eta^2 
  + \frac{1}{T} \sum_{l =1}^2 \int_0^T\int_{\spatdom} 
  \left(\pdesol(t,x, \vfield, \pdesol_0^{(l)})
      - \pdesol(t,x, \vtrue, \pdesol_0^{(l)})\right)^2 d\x dt
      	\label{eq:LLN:sloppy}
\end{align}
for all $N$ sufficiently large.\footnote{Referring back to
  \cref{sec:modelDivFree} we are assuming that $\spatdom$ is unit
  length.}  For $\delta > 0$, take
\begin{align}
  \label{eq:forward:th:ball}
  \Vdel
  = \left\{ \vfield \in \vfspacea
  : \sum_{l =1}^2 \int_0^T \int_{\spatdom} 
  \left(\pdesol(t,x, \vfield, \pdesol_0^{(l)})
      - \pdesol(t,x, \vtrue, \pdesol_0^{(l)})\right)^2 d\x dt \!
  < \delta^2   \right\}.
\end{align}
Invoking \eqref{eq:LLN:sloppy}, we observe that
\begin{align}
  \mu^N_\data(\Vdel^c)
  &\approx \frac{{ \displaystyle \int_{\Vdel^c}} \exp\left[
     -\frac{N}{4\sigma_\noise^2 T} \displaystyle \sum_{l =1}^2 \int_0^T\int_{\spatdom} 
          \left(\pdesol(\pdesol_0^{(l)}, \vfield)
  - \pdesol(\pdesol_0^{(l)}, \vtrue)\right)^2 d\x dt  \right]
         \mu_0(d \vfield)}
               {{\displaystyle\int} \exp\left[
     -\frac{N}{4\sigma_\noise^2 T} \displaystyle \sum_{l =1}^2 \int_0^T\int_{\spatdom} 
          \left(\pdesol(\pdesol_0^{(l)}, \vfield)
  - \pdesol(\pdesol_0^{(l)}, \vtrue)\right)^2 d\x dt  \right]
    \mu_0(d \vfield)}
  \notag\\
 &\leq \frac{\exp(-\frac{N \delta^2}{4\sigma_\noise^2 T})\mu_0(\Vdel^c) }
   {{\displaystyle \int_{\mathcal{X}_{\delta/2}}} \! \! \! \exp\left[
      -\frac{N}{4\sigma_\noise^2 T} \displaystyle \sum_{l =1}^2 \int_0^T\int_{\spatdom} 
           \left(\pdesol(\pdesol_0^{(l)}, \vfield)
   - \pdesol( \pdesol_0^{(l)}, \vtrue)\right)^2 d\x dt  \right]
   \mu_0(d \vfield)}
   \notag\\
  &\leq \frac{\exp(- \frac{3N\delta^2}{16\sigma_\noise^2T})}{\mu_0(\mathcal{X}_{\delta/2})}.
    \label{eq:cnst:mn:bnd}
\end{align}
Here note that (cf. \cref{rmk:PC:Supp}) $\vtrue \in \mbox{supp}(\mu_0)$
so that we are not dividing by zero in the final upper bound.

One is thus tempted to now combine \eqref{eq:conv:suff:cond} and
\eqref{eq:cnst:mn:bnd} 
and find for every $\epsilon > 0$ a corresponding $\delta > 0$ such that 
\begin{align}
  \Vdel \subset B_\vfspace^\epsilon
  \label{eq:supset}
\end{align}
so that 
\begin{align*}
  \mu_\data^N \left( B_\vfspace^\epsilon \right) \ge
  \mu_\data^N \left( \Vdel \right) \ge
  1- \frac{\exp(- \frac{3N\delta^2}{16\sigma_\noise^2T})}{\mu_0(\mathcal{X}_{\delta/2})},
\end{align*}
yielding the desired weak convergence
\eqref{eq:weak:conv:def}.  However this na\"ive argument
runs up against two fundamental flaws:
\begin{itemize}
\item[(i)] Although, as we establish below in \cref{thm:S_1to1}, the
  condition \eqref{eq:LLN:sloppy} ensures that the map
  $\vfield \mapsto (\pdesol(\cdot, \pdesol^{(1)}_0, \vfield),
  \pdesol(\cdot, \pdesol^{(2)}_0, \vfield))$
  is injective into $L^2([0,T] \times \spatdom)$ it is not clear if
  this map has a continuous inverse, which we would need for \eqref{eq:supset}.
\item[(ii)] It is not obvious that we have sufficient uniformity over
  $\vfield \in H$ in our invocation of the LLN in
  \eqref{eq:LLN:sloppy}.  In particular this means that the
  approximation in the first line in \eqref{eq:cnst:mn:bnd} would be
  unjustified.
\end{itemize}

We address both of these concerns by assuming a little bit of extra
regularity for our `true vector field' taking $\vtrue \in \vfspaceb$ and by
making effective use of the prior to identify this regularity for
$\vtrue$ (see assumptions \eqref{eq:strng:spt:cond},
\eqref{eq:assum:prior_tail_2}).  With the Rellich-Kondrachov theorem
we are thus able to use `compactness' to address both concerns.
Indeed although an injective, continuous map $\psi$ does not have a
continuous inverse in general, this property does hold true when the
domain of $\psi$ is compact; see \cref{thm:finv_continuous} below.
Regarding the second concern, we establish a uniform version of
the LLN \cref{thm:ulln} below (and see also \cite{newey1994large,nickl2013statistical}) but
our proof makes essential use of the fact that the `parameter' (which 
for us is $\vfield \in H$) lies in
a compact set.

The precise proof of \cref{thm:post:conv} is presented in a series of
sections as follows.  Firstly in \cref{sec:Sinv_continuous} we
address the injectivity of the forward map under
(\ref{eq:assum:multi_ic}) as well as continuity of the inverse map.  
In \cref{sec:pot:concentration} we introduce
a uniform version of the Law of Large Numbers, \cref{thm:ulln} and use
this Proposition to obtain a quantitative version 
of \eqref{eq:LLN:sloppy}.  \cref{sec:C:true:scalar} establishes that
$\mu^N_\data$ converges on the `true scalar field' $\pdesol(\vtrue)$ as
$N \to \infty$.  Finally \cref{sec:cons:conclusion} uses the machinery
now in place to complete the proof of \cref{thm:post:conv}.

\section{Continuity of Inverse Map}
\label{sec:Sinv_continuous}

In this section, we lay out conditions under which the inverse
solution map $\pdesol \mapsto \vfield$ is continuous.  This requires
some care.  Indeed it is not true in general that the forward map
$\Sol$ is injective, as illustrated in \cref{rmk:Illposs}.  As such,
counterexamples to \cref{thm:post:conv} exist
(cf. \cref{rmk:IC:reasons}) if we fail to impose a suitable assumption
on the initial condition(s) for \eqref{eq:adr} \`a la
\eqref{eq:assum:multi_ic}.

With this in mind we now define the solution map associated with the
solution of \eqref{eq:adr} for the multiple initial conditions.
\begin{Not}[Paired solution map $\pairSol$]
   \label{def:S_pair}
  Fix any $\pdesol_0^{(1)}, \pdesol_0^{(2)} \in H^m$ for $m> 1$ as in
  \eqref{eq:Hs} and let $\pdesol^{(1)}(\vfield), \pdesol^{(2)}(\vfield)$ be the
  associated solutions of \eqref{eq:adr} corresponding to $\vfield \in H$ defined according to
  \cref{def:adr_weak}.  We denote 
  \begin{align*}
  \pairSol(\vfield) = 
    \left(\pdesol(\cdot, \vfield, \pdesol^{(1)}_0),
       \pdesol(\cdot, \vfield, \pdesol^{(2)}_0)\right),
  \end{align*}
  regarding $\pairSol$ as a map $\pairSol:\vfspacea \to \pairSolSp$.
\end{Not}

We now observe that the the paired solution map $\pairSol$ is continuous
(\cref{thm:S_continuous_pair}) and that under condition \eqref{eq:assum:multi_ic}, 
$\pairSol$ is 1-to-1 (\cref{thm:S_1to1}). 

\begin{Cor}[$\pairSol$ continuous]
  The paired solution map $\pairSol:\vfspacea \to \pairSolSp$ (see
  \cref{def:S_pair}) is continuous.
  \label{thm:S_continuous_pair}
\end{Cor}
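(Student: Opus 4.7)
The plan is to reduce the claim to the single-initial-condition continuity statement already in hand, namely \cref{thm:th_continuous}, together with the trivial fact that continuous functions on a compact set of finite measure sit continuously inside $L^2$.

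More precisely, I would argue as follows. Fix any $\pdesol_0^{(1)}, \pdesol_0^{(2)} \in H^m(\spatdom)$ with $m > 1$ as in \cref{def:S_pair}. By \cref{thm:th_continuous}, for each $l \in \{1,2\}$ the map
\begin{equation*}
  \vfield \longmapsto \pdesol(\cdot, \vfield, \pdesol_0^{(l)})
\end{equation*}
is continuous from $\vfspacea = H^m(\spatdom)$ into $C([0,T] \times \spatdom)$ endowed with the sup norm. Since $[0,T] \times \spatdom$ is compact and therefore has finite Lebesgue measure, the inclusion $C([0,T] \times \spatdom) \hookrightarrow L^2([0,T] \times \spatdom)$ is continuous, with $\|u\|_{L^2} \le (T |\spatdom|)^{1/2} \|u\|_{\infty}$. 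Composing, the single-$\pdesol_0^{(l)}$ map is continuous as a map $\vfspacea \to L^2([0,T]\times \spatdom)$.

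The paired map $\pairSol$ is by definition the Cartesian product of these two continuous single-$\pdesol_0^{(l)}$ maps, so it is continuous from $\vfspacea$ into $\pairSolSp = L^2([0,T]\times \spatdom)^2$ (endowed with the natural product norm $\|(u_1,u_2)\|^2 = \|u_1\|_{L^2}^2 + \|u_2\|_{L^2}^2$). This is the required conclusion.

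There is essentially no hard step here; the statement is just a packaging of \cref{thm:th_continuous} together with the compactness/finite-measure observation for the space-time cylinder. If anything is worth spelling out explicitly, it is only the quantitative bound
\begin{equation*}
  \|\pairSol(\vfield) - \pairSol(\tilde{\vfield})\|_{\pairSolSp}
  \le (T |\spatdom|)^{1/2} \sum_{l=1}^{2}
    \bigl\| \pdesol(\cdot,\vfield,\pdesol_0^{(l)}) - \pdesol(\cdot,\tilde{\vfield},\pdesol_0^{(l)}) \bigr\|_{C([0,T]\times \spatdom)},
\end{equation*}
after which continuity in $\vfield \in \vfspacea$ follows immediately from the corresponding continuity of each summand on the right-hand side.
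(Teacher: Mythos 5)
Your proposal is correct and follows essentially the same route as the paper: the paper's proof likewise invokes the continuity of the single-initial-condition solution map from \cref{thm:th_continuous} (implicitly using the embedding of $C([0,T]\times\spatdom)$ into $L^2([0,T]\times\spatdom)$) and then passes to the paired map. You have merely made the embedding step and the product-norm bookkeeping explicit, which is harmless.
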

\begin{proof}
  For any $\pdesol_0 \in H^m$ (with $m$ as in \eqref{eq:vfspace}) the
  associated solution map
  $\Sol:\vfspacea \to L^2\left([0,T]\times \spatdom \right)$ given by
  $\Sol(\vfield) = \pdesol(\cdot, \vfield, \pdesol_0)$ is continuous
  by \cref{thm:S_continuous_L2} so that the map $\pairSol$ is also
  continuous.
\end{proof}

\begin{Lem}[$\pairSol$ injective]
  Let $\pairSol$ be the paired solution map given in \cref{def:S_pair}
  with initial conditions satisfying \eqref{eq:assum:multi_ic}.  Suppose
  that $\vfield, \tilde{\vfield} \in \vfspacea$ such that
  \begin{equation}
    \norm{\pairSol(\vfield) - \pairSol(\tilde{\vfield})}_{\pairSolSp}=0.
    \label{eq:pairSol_equal_l2}
  \end{equation}
  Then $\vfield = \tilde{\vfield}$, or in other words, $\pairSol$ is 
  injective.
  \label{thm:S_1to1}
\end{Lem}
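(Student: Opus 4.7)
The plan is to use the assumption \eqref{eq:pairSol_equal_l2} to extract a pointwise linear constraint on $\vfield - \tilde{\vfield}$ from the equations, and then invoke \eqref{eq:assum:multi_ic} to conclude linear independence of the constraining gradients forces $\vfield = \tilde\vfield$.

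First, write $\pdesol^{(l)}(t,\x) := \pdesol(t,\x,\vfield,\pdesol_0^{(l)})$ and $\tilde\pdesol^{(l)}(t,\x) := \pdesol(t,\x,\tilde\vfield,\pdesol_0^{(l)})$ for $l = 1,2$. Assumption \eqref{eq:pairSol_equal_l2} says $\pdesol^{(l)} = \tilde\pdesol^{(l)}$ in $L^2([0,T]\times \spatdom)$. Since \cref{def:adr_weak} gives $\pdesol^{(l)}, \tilde\pdesol^{(l)} \in C([0,T]; H^s(\spatdom))$ with $s > 1$, and $H^s$ embeds continuously in $L^2$, this equality in fact holds at every $t\in[0,T]$. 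In particular both $\pdesol^{(l)}$ and $\tilde\pdesol^{(l)}$ share the same common function $\pdesol^{(l)}$, which simultaneously solves \eqref{eq:adr_weak} for velocity field $\vfield$ and for $\tilde\vfield$, with the same initial data $\pdesol_0^{(l)}$.

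Next, subtracting the two weak formulations (\ref{eq:adr_weak}) for $\pdesol^{(l)}$ gives, for every $\phi \in H^1(\spatdom)$ and almost every $t \in [0,T]$,
\begin{equation*}
  \ip{(\vfield - \tilde\vfield) \cdot \nabla \pdesol^{(l)}(t,\cdot)}{\phi}_{L^2(\spatdom)} = 0.
\end{equation*}
Since $\vfield - \tilde\vfield \in H^m$ with $m > 1$ embeds in $L^\infty(\spatdom)$ and $\nabla \pdesol^{(l)} \in C([0,T]; H^{s-1}(\spatdom)) \subset C([0,T]; L^2(\spatdom))$, the product $(\vfield - \tilde\vfield)\cdot \nabla \pdesol^{(l)}$ lies in $C([0,T]; L^2(\spatdom))$, so the above identity upgrades to
\begin{equation*}
  (\vfield - \tilde\vfield)(\x) \cdot \nabla \pdesol^{(l)}(t,\x) = 0 \text{ for a.e.\ } \x \in \spatdom \text{ and every } t \in [0,T].
\end{equation*}
Evaluating this at $t=0$ (via the continuity in $t$ just noted, together with $\pdesol^{(l)}(0,\cdot) = \pdesol_0^{(l)}$), I obtain
\begin{equation*}
  (\vfield - \tilde\vfield)(\x) \cdot \nabla \pdesol_0^{(l)}(\x) = 0 \quad\text{for a.e.~} \x \in \spatdom, \quad l = 1,2.
\end{equation*}

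Finally, the assumption \eqref{eq:assum:multi_ic} asserts that $(\nabla \pdesol_0^{(1)})^{\perp} \cdot \nabla \pdesol_0^{(2)} \ne 0$ almost everywhere, which is precisely the determinant of the $2\times 2$ matrix whose rows are $\nabla \pdesol_0^{(1)}(\x)$ and $\nabla \pdesol_0^{(2)}(\x)$. Hence these two vectors form a basis of $\R^2$ at a.e.\ $\x \in \spatdom$, and the only $w \in \R^2$ orthogonal to both is $w = 0$. Applied pointwise, $(\vfield - \tilde\vfield)(\x) = 0$ for a.e.\ $\x$, yielding $\vfield = \tilde\vfield$ in $\vfspace$. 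The only mild technicality is the passage from the weak identity at a.e.\ $t$ to the pointwise value at $t=0$, which is handled cleanly by the time-continuity provided in \cref{def:adr_weak}; the rest is essentially linear algebra.
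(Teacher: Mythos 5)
Your proof is correct and follows essentially the same route as the paper: subtract the two weak formulations sharing the common scalar field, use time continuity of the solution to evaluate the resulting orthogonality relation at $t=0$, and then invoke \eqref{eq:assum:multi_ic} as a pointwise spanning (determinant) condition on $\nabla\pdesol_0^{(1)},\nabla\pdesol_0^{(2)}$ to force $\vfield=\tilde{\vfield}$. The only (immaterial) difference is in the continuity step: you use $\nabla\pdesol^{(l)}\in C([0,T];L^2)$ together with $\vfield-\tilde{\vfield}\in L^\infty$, whereas the paper integrates by parts to place the gradient on the test function and uses only $\pdesol^{(l)}\in C([0,\infty);L^2)$; both are valid here since $s=m>1$.
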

\begin{proof}
  Let $\vfield, \tilde{\vfield} \in \vfspacea$ satisfy \eqref{eq:pairSol_equal_l2}, i.e., 
  \begin{align*}
    \norm{ \pdesol^{(i)}(\cdot,\vfield) - \pdesol^{(i)}(\cdot,\tilde{\vfield}) }_{L^2\left([0,T]\times \spatdom \right)}=0, 
    \quad i=1,2.
  \end{align*}
  Then
  $\pdesol^{(i)}(t,\x,\vfield) = \pdesol^{(i)}(t,\x,\tilde{\vfield})$
  for almost all $t,\x$ and $i=1,2$. 
  Denote $\pdesol^{(i)}(t,\x) \coloneqq 
  \pdesol^{(i)}(t,\x,\vfield) =
  \pdesol^{(i)}(t,\x,\tilde{\vfield})$.
  Then from \cref{def:adr_weak}, $\pdesol^{(i)}(t,\x)$ satisfies
  \begin{equation}
      \ip{\frac{\partial\pdesol^{(i)}}{\partial t}{}}{\phi}_{H^{-1}\left( \spatdom \right) \times H^1(\spatdom)}  
      + \ip{\bfU \cdot \nabla \pdesol^{(i)}}{\phi}_{L^2\left( \spatdom \right)} 
      + \conductivity \ip{\nabla \pdesol^{(i)}}{\nabla \phi}_{L^2\left( \spatdom \right)} = 0
  \end{equation}
  for all $\phi \in H^1(\spatdom)$, almost all time $t \in [0,\infty)$, $i=1,2$, and $\bfU = \vfield, \tilde{\vfield}$.  
  Subtraction leads to
  \begin{equation}
    g(t) = \ip{ \left( \vfield - \tilde{\vfield} \right) \cdot \nabla \pdesol^{(i)}(t)}{\phi}_{L^2\left( \spatdom \right)} = 0
  \end{equation}
  for all $\phi \in H^1(\spatdom)$, almost all time $t \in [0,\infty)$, and $i=1,2$. Since we also have
  \begin{equation}
    g(t) = -\ip{ \pdesol^{(i)}(t)}{ \left( \vfield - \tilde{\vfield} \right) \cdot \nabla \phi}_{L^2\left( \spatdom \right)}
  \end{equation}
  and $\pdesol^{(i)} \in C([0,\infty); L^2(\spatdom))$ by \cref{def:adr_weak} we infer that $g(t)=0$ for all $t \ge 0$ and in particular $g(0)=0$.
  Therefore
  \begin{equation}
    \left( \vfield(\x) - \tilde{\vfield}(\x) \right) \cdot \nabla \pdesol_0^{(i)}(\x) = 0
  \end{equation}
  for $i=1,2$ and almost all $\x \in \spatdom$. However, under the assumption \eqref{eq:assum:multi_ic}, 
  $\nabla \pdesol_0^{(1)}(\x),\nabla \pdesol_0^{(2)}(\x)$ span $\R^2$
  at almost all $\x$. Therefore $\tilde{\vfield}(\x) = \vfield(\x)$
  for almost all $\x$ and hence
  $\norm{ \vfield - \tilde{\vfield}}_\vfspacea = 0$, completing the proof.
\end{proof}

Even under the conditions of \cref{thm:S_1to1} it remains unclear if
$\pairSol$ has a continuous inverse.  To remedy this we recall the
following elementary fact from real analysis suggesting we 
further restrict the domain of $\pairSol$.
\begin{Lem}
  Let $Y,Z$ be metric spaces and suppose that $B \subset Y$ is
  compact. Let $f: Y \to Z$ be injective and continuous. Then
  $f^{-1}:f(B) \to Y$ is also continuous.\footnote{Here, we denote
  $f(B)\coloneqq \left\{ f(y) \in Z : y \in B \right\}$.}
  \label{thm:finv_continuous}
\end{Lem}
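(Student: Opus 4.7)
The statement is a classical point-set fact, and since $Y$ and $Z$ are metric spaces I plan to argue via sequential continuity. Specifically, I would fix an arbitrary convergent sequence $z_n \to z$ inside $f(B)$, write $y_n = f^{-1}(z_n)$ and $y = f^{-1}(z)$, and aim to show $y_n \to y$ in $Y$. Injectivity of $f$ guarantees that $y_n, y \in B$, so the task reduces to controlling a sequence inside the compact set $B$.

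The core step uses the standard ``every subsequence has a convergent sub-subsequence'' trick. Starting from an arbitrary subsequence $\{y_{n_k}\}$, sequential compactness of $B$ (valid since $B$ is a compact subset of a metric space) produces a further subsequence $y_{n_{k_j}} \to \tilde{y}$ for some $\tilde{y} \in B$. Continuity of $f$ then yields $f(y_{n_{k_j}}) \to f(\tilde{y})$, while by construction $f(y_{n_{k_j}}) = z_{n_{k_j}} \to z = f(y)$. Uniqueness of limits in the metric space $Z$ forces $f(\tilde{y}) = f(y)$, and injectivity of $f$ then yields $\tilde{y} = y$. Since every subsequence of $\{y_n\}$ admits a further subsequence converging to the same point $y$, the whole sequence $\{y_n\}$ converges to $y$, which is exactly sequential continuity of $f^{-1}$ at the arbitrary point $z \in f(B)$.

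There is no genuine obstacle here; the only non-elementary ingredient is the equivalence of compactness and sequential compactness for subsets of a metric space. Equivalently, one may phrase the argument as the classical observation that a continuous bijection from a compact space to a Hausdorff space is automatically a homeomorphism, applied to $f|_B : B \to f(B)$; continuity of $f^{-1}: f(B) \to Y$ then follows by post-composition with the inclusion $B \hookrightarrow Y$.
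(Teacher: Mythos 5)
Your argument is correct and follows essentially the same route as the paper's proof: a sequential-continuity argument using the subsequence/sub-subsequence trick, compactness of $B$, continuity and injectivity of $f$, and uniqueness of limits. No gaps to report.
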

We provide a proof here for completeness, although this result can be found in many real analysis references; see, e.g., \cite[Theorem 4.17]{rudin1964principles}. 
\begin{proof}
  Let $z_n,z \in f(B)$ such that $z_n \to z$. Define $y_n,y \in B$
  according to $y_n = f^{-1}(z_n)$ and $y = f^{-1}(z)$. We 
  would like to show that $y_n \to y$ as $n \to \infty$. 
  
  To this end let $y_{n'}$ be any subsequence.  Since $B$ compact,
  there exists a subsubsequence $y_{n''}$ that converges in $B$;
  denote this limit $\tilde{y} \in B$. Then, since $f$ continuous,
  $f(y_{n''}) \to f(\tilde{y})$. But, by definition and the assumed
  convergence of $z_n$, we also have
  $f(y_{n''}) = z_{n''} \to z = f(y)$ so that
  $f(\tilde{y})=f(y)$. Since $f$ injective, $y=\tilde{y}$, i.e.,
  $f^{-1}(z_{n''}) \to f^{-1}(z)$. However since the original
  subsequence was arbitrary we have in fact that $f^{-1}(z_{n}) \to f^{-1}(z)$
  yielding the desired result.
\end{proof}

From \cref{thm:finv_continuous} we draw the following two
conclusions.
\begin{Cor}[$\pairSol^{-1}$ continuous]
  \label{thm:Sinv_continuous}
  Let $\pairSol:\vfspacea \to \pairSolSp$ be the paired solution map
  given in \cref{def:S_pair} with initial conditions meeting
  \eqref{eq:assum:multi_ic}. Then, for any $r > 0$ and
  $\vfield_0 \in \vfspaceb$,
  $\pairSol^{-1}:\pairSol\left(B_\vfspaceb^r\left( \vfield_0
    \right)\right) \to \vfspacea$ is continuous.
\end{Cor}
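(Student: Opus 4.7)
The plan is to reduce this statement to a direct application of \cref{thm:finv_continuous} with $Y = \vfspacea$, $Z = \pairSolSp$, $f = \pairSol$, and $B = B_\vfspaceb^r(\vfield_0)$. The map $\pairSol$ is continuous from $\vfspacea$ into $\pairSolSp$ by \cref{thm:S_continuous_pair}, and it is injective on all of $\vfspacea$ by \cref{thm:S_1to1} under the standing hypothesis \eqref{eq:assum:multi_ic}. Hence the only nontrivial task is to verify that $B_\vfspaceb^r(\vfield_0)$ is compact when regarded as a subset of $\vfspacea$.

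For this I would invoke the Rellich--Kondrachov theorem: since $m^\star > m$, the embedding $\vfspaceb = H^{m^\star}(\spatdom) \hookrightarrow H^{m}(\spatdom) = \vfspacea$ is compact, so any bounded subset of $\vfspaceb$ is precompact in $\vfspacea$. It remains to check that $B_\vfspaceb^r(\vfield_0)$ is closed in $\vfspacea$. Take any sequence $\vfield_n \in B_\vfspaceb^r(\vfield_0)$ with $\vfield_n \to \vfield$ in $\vfspacea$. Being bounded in the Hilbert space $\vfspaceb$, $(\vfield_n)$ admits a subsequence $\vfield_{n_k}$ converging weakly in $\vfspaceb$ to some $\tilde{\vfield}$ with $\|\tilde{\vfield} - \vfield_0\|_\vfspaceb \le r$ (by weak lower semicontinuity of the norm, so $\tilde{\vfield} \in B_\vfspaceb^r(\vfield_0)$). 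The compact embedding upgrades this weak convergence in $\vfspaceb$ to strong convergence in $\vfspacea$, so $\vfield_{n_k} \to \tilde{\vfield}$ in $\vfspacea$. Combining with $\vfield_{n_k} \to \vfield$ in $\vfspacea$ forces $\vfield = \tilde{\vfield} \in B_\vfspaceb^r(\vfield_0)$.

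With compactness of $B_\vfspaceb^r(\vfield_0) \subset \vfspacea$ in hand, \cref{thm:finv_continuous} immediately yields that $\pairSol^{-1} : \pairSol\!\left(B_\vfspaceb^r(\vfield_0)\right) \to \vfspacea$ is continuous, which is the claim. I do not expect a genuine obstacle here: all the technical work has been absorbed into \cref{thm:S_continuous_pair}, \cref{thm:S_1to1}, and \cref{thm:finv_continuous}. The corollary is really the observation that the higher-regularity space $\vfspaceb$, whose role is justified by the prior tail condition \eqref{eq:assum:prior_tail_2}, is exactly what is needed to promote pointwise injectivity of $\pairSol$ into continuity of its inverse on the relevant subdomains, via the Rellich--Kondrachov compactness that is unavailable at the level of $\vfspacea$ alone.
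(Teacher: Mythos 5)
Your proposal is correct and follows essentially the same route as the paper: apply \cref{thm:finv_continuous} with continuity from \cref{thm:S_continuous_pair}, injectivity from \cref{thm:S_1to1}, and compactness of $B_\vfspaceb^r(\vfield_0)$ in $\vfspacea$ via the Rellich--Kondrachov theorem. Your extra verification that the ball is closed in $\vfspacea$ (via weak compactness and weak lower semicontinuity of the $\vfspaceb$-norm) is a welcome detail the paper leaves to the cited reference, but it does not change the argument.
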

\begin{proof}
  We have $\pairSol:\vfspacea \to \pairSolSp$ continuous by
  \cref{thm:S_continuous_pair} and injective by \cref{thm:S_1to1}. We
  also have $B_\vfspaceb^r\left( \vfield_0 \right)$ compact in
  $\vfspacea$ by the Rellich-Kondrachov Theorem; see, e.g., Corollary
  A.5 of \cite{robinson2001infinite}. Therefore,
  $\pairSol^{-1}:\pairSol\left(B_\vfspaceb^r\left( \vfield_0
    \right)\right) \to \vfspacea$
  is continuous by \cref{thm:finv_continuous}.
\end{proof}

\begin{Cor}
  \label{thm:Sinv_continuous2}
  Let $r > 0$ and $\vfield_0 \in \vfspaceb$. For all $\epsilon > 0$, there exists a $\delta > 0$ such that 
  \begin{align*}
    \left\{ \vfield \in \vfspace : 
      \norm{ \tilde{\Sol}(\vfield) - \tilde{\Sol}(\vfield_0 ) }_{\pairSolSp} 
      < \delta \right\} 
    \cap
    B_\vfspaceb^r\left(\vfield_0  \right) 
        \subset 
            B_\vfspacea^\epsilon\left( \vfield_0 \right) .
  \end{align*}
\end{Cor}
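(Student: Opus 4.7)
The plan is to derive this statement almost immediately from \cref{thm:Sinv_continuous} by unpacking what continuity of $\pairSol^{-1}$ on $\pairSol(B_\vfspaceb^r(\vfield_0))$ means in terms of $\epsilon$-$\delta$ neighborhoods at the base point $\pairSol(\vfield_0)$. Let me set $K = B_\vfspaceb^r(\vfield_0)$; by Rellich--Kondrachov, $K$ is compact in $\vfspacea$, and $\pairSol|_K:K \to \pairSol(K)$ is a continuous bijection, hence a homeomorphism. Continuity of $\pairSol^{-1}$ at $\pairSol(\vfield_0) \in \pairSol(K)$ says: given $\epsilon > 0$, there exists $\delta > 0$ such that every $w \in \pairSol(K)$ with $\norm{w - \pairSol(\vfield_0)}_{\pairSolSp} < \delta$ satisfies $\norm{\pairSol^{-1}(w) - \vfield_0}_\vfspacea < \epsilon$. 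Translating back to $\vfield \in K$ via $w = \pairSol(\vfield)$, this is exactly the conclusion of the statement.

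If one prefers a direct argument that avoids invoking \cref{thm:Sinv_continuous}, the same conclusion can be obtained by contradiction: suppose for some $\epsilon_0 > 0$ and every $n \in \N$ we could find $\vfield_n \in B_\vfspaceb^r(\vfield_0)$ with $\norm{\pairSol(\vfield_n) - \pairSol(\vfield_0)}_{\pairSolSp} < 1/n$ but $\norm{\vfield_n - \vfield_0}_\vfspacea \geq \epsilon_0$. Compactness of $B_\vfspaceb^r(\vfield_0)$ in $\vfspacea$ extracts a subsequence $\vfield_{n_k} \to \vfield^{\ast}$ in $\vfspacea$ with $\vfield^{\ast} \in B_\vfspaceb^r(\vfield_0)$. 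Then $\norm{\vfield^{\ast} - \vfield_0}_\vfspacea \geq \epsilon_0 > 0$, while on the other hand continuity of $\pairSol$ on $\vfspacea$ (\cref{thm:S_continuous_pair}) gives $\pairSol(\vfield_{n_k}) \to \pairSol(\vfield^{\ast})$ in $\pairSolSp$, forcing $\pairSol(\vfield^{\ast}) = \pairSol(\vfield_0)$. Injectivity of $\pairSol$ (\cref{thm:S_1to1}) then yields $\vfield^{\ast} = \vfield_0$, contradicting the lower bound.

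There is essentially no technical obstacle: the real work has already been done, namely establishing injectivity under \eqref{eq:assum:multi_ic}, continuity of the forward map, and compactness of $\vfspaceb$-balls in $\vfspacea$. The only thing to watch is that the closed $V$-ball is what is compact in $H$, which is the sense of $B_\vfspaceb^r(\vfield_0)$ used in \eqref{eq:bvfspaceb}, so no approximation by interior points is needed.
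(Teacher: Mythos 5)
Your first argument is exactly how the paper treats this statement: it is left as an immediate consequence of \cref{thm:Sinv_continuous}, unpacking continuity of $\pairSol^{-1}$ at $\pairSol(\vfield_0) \in \pairSol\left(B_\vfspaceb^r(\vfield_0)\right)$ in $\epsilon$--$\delta$ form, so your proof is correct and matches the intended one. Your alternative compactness/contradiction argument is also valid, but it is just an inlining of the same ingredients (continuity, injectivity, Rellich--Kondrachov) already packaged in \cref{thm:Sinv_continuous}.
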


\section{Concentration of Normalized Potentials, 
  Uniform Law of Large Numbers}
\label{sec:pot:concentration}

The next step in our analysis is to prove a rigorous and more
quantitative version of \eqref{eq:LLN:sloppy},
\cref{thm:l2_decompose_unif}, which yields the asymptotics of the
potential functions (log-likelihoods) appearing in the posterior
measures $\mu^N_\data$ defined as in \eqref{eq:rand:post:measure}.  As
a preliminary step we introduce a uniform version of the Law of Large
Numbers. See also \cite{newey1994large,nickl2013statistical} for previous related results.

\begin{Prop}[Uniform Law of Large Numbers]
  \label{thm:ulln}
  Let $(X, \Xmet)$ be a metric space with $B \subset X$ compact and
  $f:\R^n \times X \to \R$ (Borel) measurable. Take
  $\left\{ Z_j \right\}_{j=1}^\infty \in \R^n$ to be an i.i.d.~sequence
  of random variables and let $Z$ be any random variable
  with this distribution. Assume that 
  \begin{align}
    \Exp f(Z,x)^2 < \infty, \quad
    \text{ for all } x \in B 
    \label{eq:ulln_moment}
  \end{align}
  and that there exists a deterministic function
  $d:\R^n \to \R^{+}$ with $\Exp d(Z)^2 < \infty$ such that for all
  $\epsilon > 0$ and $x \in B$, there exists a $\delta = \delta(x,\epsilon) > 0$ such that
  \begin{align}
    \Xmet(x,\tilde{x}) < \delta \implies 
    |f(z, x) -f(z,\tilde{x})| \le d(z)\epsilon, \text{ for all $z \in \R^n$.}
    \label{eq:ulln_continuity}
  \end{align}
  Then 
  \begin{align}
    \lim_{N\to\infty} \sup_{x \in B} \left| 
    \frac{1}{N} \sum_{j=1}^N f(Z_j,x) - \Exp f(Z,x) \right| = 0 \quad a.s.
    \label{eq:ulln}
  \end{align}
\end{Prop}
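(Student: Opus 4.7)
The plan is to exploit compactness of $B$ together with the equicontinuity hypothesis \eqref{eq:ulln_continuity} to reduce the uniform convergence claim to finitely many pointwise invocations of the strong law of large numbers (SLLN), with the residual error controlled by a separate SLLN applied to $d(Z_j)$.

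First I would fix $\epsilon > 0$. For each $x \in B$, select $\delta(x,\epsilon) > 0$ from \eqref{eq:ulln_continuity}. The open balls $\{ B_X^{\delta(x,\epsilon)}(x) : x \in B \}$ cover the compact set $B$, so I may extract a finite subcover with centers $x_1, \dots, x_K \in B$ for some $K = K(\epsilon)$. For any $x \in B$, pick an index $i(x)$ with $\Xmet(x, x_{i(x)}) < \delta(x_{i(x)},\epsilon)$. Then \eqref{eq:ulln_continuity} gives $|f(z,x) - f(z, x_{i(x)})| \le d(z)\epsilon$ for every $z \in \R^n$, and integrating against the distribution of $Z$ yields $|\Exp f(Z,x) - \Exp f(Z, x_{i(x)})| \le \epsilon\, \Exp d(Z)$; here $\Exp d(Z) < \infty$ follows from $\Exp d(Z)^2 < \infty$ by Cauchy--Schwarz.

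Second, applying the triangle inequality produces, uniformly in $x \in B$,
\[
\left| \frac{1}{N} \sum_{j=1}^N f(Z_j, x) - \Exp f(Z, x) \right| \le \epsilon \cdot \frac{1}{N} \sum_{j=1}^N d(Z_j) + \max_{1 \le i \le K} \left| \frac{1}{N} \sum_{j=1}^N f(Z_j, x_i) - \Exp f(Z, x_i) \right| + \epsilon\, \Exp d(Z).
\]
The classical SLLN applies to $\{d(Z_j)\}$ (finite first moment from $\Exp d(Z)^2 < \infty$) and, for each $i \le K$, to $\{f(Z_j, x_i)\}$ (finite first moment by \eqref{eq:ulln_moment}), so on a full-probability event $\Omega_\epsilon$, each of the $K+1$ sample averages converges to its mean. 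Taking limsup on $\Omega_\epsilon$ in the display above yields
\[
\limsup_{N \to \infty} \sup_{x \in B} \left| \frac{1}{N} \sum_{j=1}^N f(Z_j, x) - \Exp f(Z, x) \right| \le 2 \epsilon\, \Exp d(Z).
\]

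Finally, to handle the quantifier on $\epsilon$, I would restrict to a countable sequence $\epsilon_k \downarrow 0$ (e.g.\ $\epsilon_k = 1/k$) and intersect the full-probability events $\Omega_{\epsilon_k}$ to obtain a single event $\Omega^\star := \bigcap_k \Omega_{\epsilon_k}$ of full probability on which the limsup bound holds for every $k$; sending $k \to \infty$ then delivers \eqref{eq:ulln}. The main obstacle is really just bookkeeping: for each $\epsilon$ the finite cover (and thus the finite list of pointwise SLLN statements and the exceptional set) depends on $\epsilon$, so care is needed to pass to a single almost-sure event on which the uniform convergence holds for all thresholds simultaneously; the combinatorial and analytic steps themselves are standard.
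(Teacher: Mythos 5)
Your proof is correct and follows essentially the same route as the paper's: extract a finite subcover of the compact set $B$ from the modulus-of-continuity balls, apply the pointwise SLLN at the finitely many centers and the SLLN to $\{d(Z_j)\}$ to control the oscillation term, and then intersect the full-measure events along $\epsilon_k = 1/k$. The only (harmless) difference is cosmetic: the paper centers $f$ by its mean and rescales the modulus by $2\Exp d(Z)$ to end with a bound of exactly $\epsilon$, which forces it to treat the degenerate case $\Exp d(Z)=0$ separately, whereas your bound $2\epsilon\,\Exp d(Z)$ avoids that case split entirely.
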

\begin{proof}
  Note that, since $d$ is non-negative, $\Exp d(Z)=0$ implies that 
  \begin{align*}
  	\tilde{\Omega} = \bigcap_{j=1}^\infty\left\{ \omega \in \Omega : d(Z_j(\omega)) =0\right\}
   \end{align*}
   is a set of full measure in which case the random functions $x \mapsto f(Z_j, x)$, 
   $j =1,2, \ldots$ are all constant on $\tilde{\Omega}$ and the result \eqref{eq:ulln} 
   follows for this special case. 
   
   We turn to the nontrivial case where $\Exp d(Z) \ne 0$.
  Define $g(z,x) \coloneqq f(z,x) - \Exp f(Z,x)$, $z \in \R^n, x \in X$. 
  Then by our assumptions
  on $f$, $\Exp g(Z,x)^2 < \infty$ for every $x \in B$.  Note also that
  for any $x, \tilde{x} \in X$, $\epsilon > 0$, and $z \in \R^n$,
  \begin{align}
    |f(z,x) -f(z,\tilde{x})&| \le d(z)\epsilon \implies |g(z,x)-g(z,\tilde{x})| \le \left[ d(z) + \Exp d(Z) \right]\epsilon.
    \label{eq:ulln_g_continuity}
  \end{align}

  Fix any $\epsilon > 0$. Then by \eqref{eq:ulln_continuity} and
  \eqref{eq:ulln_g_continuity}, for each $x \in B$ there exists a
  $\delta(x,\epsilon) > 0$ such that $\Xmet(\tilde{x}, x) < \delta(x,\epsilon)$
  implies
  $|g(z,\tilde{x})-g(z,x)| < \frac{d(z) + \Exp d(Z)}{2\Exp
    d(Z)}\epsilon$.
  Let
  $B^{\delta(x, \epsilon)}(x) = \left\{ \tilde{x} \in X: \Xmet(\tilde{x}, x) <
    \delta(x, \epsilon) \right\}$
  and note that $\cup_{x \in B} B^{\delta(x, \epsilon)}(x) \supset B$. Then since $B$ is
  compact, there exists a finite subcovering
  $\left\{ B^{\delta_i}(x_i) \right\}_{i=1}^m$, $\delta_i := \delta(x_i, \epsilon)$ such that
  \begin{align*}
    \bigcup_{i=1}^{m} B^{\delta_i}(x_i) \supset B.
  \end{align*}
  Let $x \in B$ and let $i$ be an index such that $x \in B^{\delta_i}(x_i)$. Then
  \begin{align*}
      \left| \frac{1}{N} \sum_{j=1}^N g(Z_j,x) \right| 
      &\le \left| \frac{1}{N} \sum_{j=1}^N (g(Z_j,x) - g(Z_j,x_i)) \right| 
        + \left| \frac{1}{N} \sum_{j=1}^N g(Z_j,x_i) \right| \\
      &\le \frac{\epsilon}{2\Exp d(Z)} 
        \left| \frac{1}{N} \sum_{j=1}^N d(Z_j) + \Exp d(Z) \right| 
         + \left| \frac{1}{N} \sum_{j=1}^N g(Z_j,x_i) \right|. \\
  \end{align*}
  Taking the supremum over $x$ and using the subcovering yields
  \begin{align*}
      \sup_{x \in B} \left| \frac{1}{N} \sum_{j=1}^N g(Z_j,x) \right| 
      &\le \max_{i=1,\dots,m} \sup_{x \in B^{\delta_i}(x_i)} 
            \left[ \frac{\epsilon}{2\Exp d(Z)} 
           \left| \frac{1}{N} \sum_{j=1}^N d(Z_j) + \Exp d(Z) \right| 
      	      + \left| \frac{1}{N} \sum_{j=1}^N g(Z_j,x_i) \right| \right] \\
      &\le \frac{\epsilon}{2\Exp d(Z)} \left| \frac{1}{N} \sum_{j=1}^N d(Z_j) 
        + \Exp d(Z) \right| 
        + \max_{i=1,\dots,m} \left| \frac{1}{N} \sum_{j=1}^N g(Z_j,x_i) \right|. \\
  \end{align*}
  Then the Strong Law of Large Numbers gives
    \begin{align*}
      \limsup_{N \to \infty} \left[\sup_{x \in B} 
           \left| \frac{1}{N} \sum_{j=1}^N g(Z_j,x) \right| 
      \right] 
      &\le \limsup_{N \to \infty} \left(
      	   \frac{\epsilon}{2\Exp d(Z)} \left| \frac{1}{N} \sum_{j=1}^N d(Z_j) 
      	+ \Exp d(Z) \right| 
        + \max_{i=1,\dots,m} \left| \frac{1}{N} \sum_{j=1}^N g(Z_j,x_i) \right| 
	\right)\\
      &\le \epsilon \frac{2\Exp d(Z)}{2\Exp d(Z)} 
        + \max_{i=1,\dots,m} \Exp g(Z,x_i) = \epsilon \quad a.s.
    \end{align*}
  where the last equality follows from the fact that $\Exp g(Z,x)=0$ for all $x$. Thus, we have
  \begin{align*}
    \Omega_\epsilon \coloneqq 
    \left\{ \lim_{N \to \infty} 
    \sup_{x \in B} \left| \frac{1}{N} \sum_{j=1}^N g(Z_j,x) \right| < \epsilon \right\}
  \end{align*}
  has probability $1$ for all $\epsilon > 0$. Then taking
  $\Omega_{0} = \cap_{k=1}^{\infty} \Omega_{\frac{1}{k}}$ and invoking
  the continuity of measures,
  \begin{align*}
    \Prob \left\{ \Omega_{0} \right\} 
    = \Prob \left\{ \lim_{N \to \infty} \sup_{x \in B} 
           \left| \frac{1}{N} \sum_{j=1}^N g(Z_j,x) \right| = 0 
           \right\} 
    = \lim_{K \to \infty} \Prob \left\{ \cap_{k=1}^{K} \Omega_{\frac{1}{k}} \right\} 
    = 1.
  \end{align*}
  which is the desired result.
\end{proof}

We now use this uniform law of large numbers to show that for large
$N$, the growth in the log-likelihood (normalized by $\frac{1}{N}$) 
for a vector field $\vfield$ can be written in terms of the
observation error and the difference between the scalar fields
associated with $\vfield$ and $\vtrue$.

\begin{Prop}
  \label{thm:l2_decompose_unif}
   Let $\left\{(t_j,\x_j)\right\}_{j=1}^\infty$ be a sequence of
  observation points independently and identically uniformly distributed in $[0,T] \times
  \spatdom$.
  Fix a $\vtrue \in \vfspaceb$ and draw associated data points
  $\left\{ \data_j \right\}_{j=1}^\infty$ according to 
  \begin{align}
    \data_j = \G_j\left(\vtrue \right) + \noise_j
    	\label{eq:LLN:data:def}
  \end{align}
  for i.i.d.~observational noise
  $\noise_j \sim N\left(0,\sigma_\noise^2\right)$ independent of
  $\left\{ \data_j \right\}_{j=1}^\infty$ and the
  parameter-to-observable (forward) maps $\G_j$ given by
  \eqref{eq:for:m:consist}.  Then, for any $r>0$,
  \begin{align}
    \limsup_{N \to \infty} \sup_{\vfield \in B_\vfspaceb^r\left( \vtrue \right)} 
    \Biggl| &\frac{1}{N} \sum_{j=1}^{N} \left( \data_{j} - \G_j(\vfield) \right)^2 
      - \left( \sigma_\noise^2 
      + \frac{1}{2T}\norm{ \pairSol(\vtrue) - \pairSol(\vfield) }_{\pairSolSp}^2 \right) 
     \Biggr| = 0,
    \label{eq:l2_decompose_unif}
  \end{align}
  almost surely,  where $\pairSol$ is the paired solution operator as in
  \cref{def:S_pair}. 
\end{Prop}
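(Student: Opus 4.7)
My strategy is to expand the squared residual $(\data_j - \G_j(\vfield))^2$ using $\data_j = \G_j(\vtrue) + \noise_j$ and then apply the uniform law of large numbers \cref{thm:ulln} on the set $B = B_\vfspaceb^r(\vtrue)$, which is compact in the ambient parameter space $\vfspace$ by the Rellich--Kondrachov theorem (as already invoked in \cref{thm:Sinv_continuous}).

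To accommodate the alternating structure of $\G_j$ in \eqref{eq:for:m:consist}, I would first bundle observations into pairs. For even $N = 2M$, set $Z_k := (t_k, \x_k, \noise_{2k-1}, \noise_{2k})$, which yields an i.i.d.\ $\R^5$-valued sequence, and define
\begin{equation*}
  f(Z_k, \vfield) := \sum_{l=1}^{2} \Bigl( \noise_{2k-2+l} + \pdesol(t_k, \x_k, \vtrue, \pdesol_0^{(l)}) - \pdesol(t_k, \x_k, \vfield, \pdesol_0^{(l)}) \Bigr)^{2}.
\end{equation*}
Expanding the square and using independence of the $\noise_j$ from $(t_k, \x_k)$, together with Fubini and the uniform distribution on $[0,T] \times \spatdom$, yields
\begin{equation*}
  \Exp f(Z, \vfield) = 2\sigma_\noise^2 + \frac{1}{T} \norm{\pairSol(\vtrue) - \pairSol(\vfield)}_{\pairSolSp}^{2}.
\end{equation*}
Consequently $\tfrac{1}{2M} \sum_{k=1}^{M} f(Z_k, \vfield)$ is the average sitting inside the supremum of \eqref{eq:l2_decompose_unif} for $N = 2M$, and the odd-$N$ case is handled by absorbing the leftover term $(\data_{2M+1} - \G_{2M+1}(\vfield))^2/(2M+1)$ into the $o(1)$ remainder via the same uniform bound applied to a single summand.

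Next I would verify the two hypotheses of \cref{thm:ulln} with $X = \vfspace$ and $B = B_\vfspaceb^r(\vtrue)$. The second-moment condition \eqref{eq:ulln_moment} follows from \cref{thm:pt_obs_continuous}: continuity of $\Sol: \vfspace \to C([0,T] \times \spatdom)$ on the compact set $B$ produces a finite constant $K := \sup_{\vfield \in B} \max_{l=1,2} \norm{\pdesol(\cdot, \vfield, \pdesol_0^{(l)})}_{C([0,T] \times \spatdom)}$, which combined with the finiteness of Gaussian moments gives $\Exp f(Z,\vfield)^2 < \infty$ uniformly in $\vfield \in B$. For the envelope condition \eqref{eq:ulln_continuity}, the identity $(a+b)^2 - (a+c)^2 = (b-c)(2a+b+c)$ applied with $a = \noise_{2k-2+l}$ produces
\begin{equation*}
  |f(z,\vfield) - f(z,\tilde\vfield)| \le \sum_{l=1}^{2} \norm{\pdesol^{(l)}(\tilde\vfield) - \pdesol^{(l)}(\vfield)}_{C([0,T] \times \spatdom)} \bigl( 2|\noise_{2k-2+l}| + 4K \bigr),
\end{equation*}
and continuity of $\Sol: \vfspace \to C([0,T] \times \spatdom)$ guarantees that for each $\vfield \in B$ and $\epsilon > 0$ there exists $\delta = \delta(\vfield, \epsilon) > 0$ such that $\norm{\tilde\vfield - \vfield}_{\vfspace} < \delta$ forces the $C([0,T] \times \spatdom)$ differences above to be less than $\epsilon$. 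This yields the envelope $d(z) := \sum_{l=1}^{2} (2|\noise_{2k-2+l}| + 4K)$, which has finite second moment under the i.i.d.~Gaussian noise.

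Invoking \cref{thm:ulln} then gives $\sup_{\vfield \in B} |\tfrac{1}{M} \sum_{k=1}^M f(Z_k, \vfield) - \Exp f(Z, \vfield)| \to 0$ almost surely, which after dividing by two and handling the odd-$N$ remainder yields \eqref{eq:l2_decompose_unif}. \textbf{The main technical obstacle} is establishing the envelope condition with a globally integrable dominating function $d(z)$: one needs a sup-norm bound on $\pdesol(\cdot, \vfield, \pdesol_0^{(l)})$ that is uniform over $\vfield$ in the full ball $B_\vfspaceb^r(\vtrue)$, not just pointwise in $\vfield$. This is precisely where the compactness of $B$ in $\vfspace$ (via Rellich--Kondrachov, since $m^\star > m$), together with the continuous embedding $\Sol: \vfspace \to C([0,T] \times \spatdom)$ from \cref{thm:pt_obs_continuous}, becomes essential; it is also the reason the result is formulated on the smoother ball $B_\vfspaceb^r(\vtrue)$ rather than on a ball in $\vfspace$ itself.
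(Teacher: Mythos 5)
Your proposal is correct and rests on the same key lemma as the paper (the uniform LLN, \cref{thm:ulln}, applied on the Rellich--Kondrachov-compact ball $B_\vfspaceb^r(\vtrue)$), but the decomposition is genuinely different. The paper expands $(\data_j-\G_j(\vfield))^2$ into three terms -- $\noise_j^2$, the cross term $2\noise_j(\G_j(\vtrue)-\G_j(\vfield))$, and $(\G_j(\vtrue)-\G_j(\vfield))^2$ -- handles the first with the ordinary SLLN (it is $\vfield$-independent), and then splits the cross and squared-difference sums into odd- and even-indexed subsequences (one per initial condition), invoking \cref{thm:ulln} separately for each; this requires verifying the mean-zero property of the cross term and computing $\Exp T_{3,j}$ for each parity. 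You instead bundle the pair of observations sharing the point $(t_k,\x_k)$ into a single i.i.d.\ variable $Z_k=(t_k,\x_k,\noise_{2k-1},\noise_{2k})$ and apply \cref{thm:ulln} once to the full paired residual, with the target limit $\sigma_\noise^2+\tfrac{1}{2T}\|\pairSol(\vtrue)-\pairSol(\vfield)\|^2_{\pairSolSp}$ emerging directly from $\tfrac12\Exp f(Z,\vfield)$; the price is a slightly heavier envelope computation and an explicit (but routine, since $\noise_N^2/N\to0$ a.s.) treatment of the odd-$N$ remainder, while the gain is a single ULLN invocation with no separate cross-term analysis. One small refinement relative to your closing remark: the uniform sup-norm bound $K$ on the scalar fields does not actually require compactness -- the paper obtains it from the maximum principle, $\|\pdesol(\cdot,\vfield,\pdesol_0^{(l)})\|_{L^\infty}\le\|\pdesol_0^{(l)}\|_{L^\infty}$ uniformly over \emph{all} $\vfield$ -- so compactness of $B_\vfspaceb^r(\vtrue)$ in $\vfspace$ is essential only for the finite-subcover step inside \cref{thm:ulln} (and for the inverse-continuity argument elsewhere), not for the envelope bound itself; also note that your envelope must cover $\tilde\vfield$ slightly outside $B$, which is harmless since continuity of $\Sol$ (or the maximum principle) keeps the extra sup-norm contribution bounded.
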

\begin{proof}
    Referring to \eqref{eq:LLN:data:def} and expanding we have
    \begin{align}
       \frac{1}{N} \sum_{j=1}^{N} \left( \data_{j} - \G_{j}(\vfield) \right)^2
      &= \frac{1}{N} \sum_{j=1}^{N} \noise_{j}^2 
      +  \frac{2}{N} \sum_{j=1}^{N} \noise_{j}\left( \G_{j}(\vtrue) - \G_{j}(\vfield) \right) 
      +  \frac{1}{N} \sum_{j=1}^{N} \left( \G_{j}(\vtrue) - \G_{j}(\vfield) \right)^2 
      \notag\\
      &:= \frac{1}{N}\sum_{j =1}^{N}(T_{1,j} + 2 T_{2,j} + T_{3,j}),
        \label{eq:uLLN:app:exp}
    \end{align}
    for any $N \geq 1$.
  We will now focus on each of the three terms on the right hand side. 

  \textbf{Terms involving }$\mathbf{T_{1,j}}$: For this first term, the law of
  large numbers yields
  \begin{align}
     \lim_{N \to \infty}\frac{1}{N}\sum_{j =1}^NT_{1,j} 
       = \Exp \noise_j^2 = \sigma_\noise^2 \quad a.s.
    \label{eq:l2_decompose_term1}
  \end{align}
  Also, these terms are independent of, and therefore uniform in,
  $\vfield \in H$.

  \textbf{Terms involving }$\mathbf{T_{2,j}}$: Here we establish 
  uniform convergence using \cref{thm:ulln}.    
  Denote $z = (z_\noise, z_t, z_{x_1}, z_{x_2}) \in \R^4$ and define
  \begin{align}
    f_i(z,\vfield)
      = z_\noise
      \left(\pdesol(z_t,(z_{x_1}, z_{x_2}),\vtrue,\pdesol_0^{(i)}) 
        - \pdesol(z_t,(z_{x_1}, z_{x_2}),\vfield,\pdesol_0^{(i)} ) \right),
    \label{eq:T2:f:uLLN}
  \end{align}
  for $i = 1,2$.
  Let us verify the conditions required by \cref{thm:ulln} for $f_i$.    
  Note that by our assumption on $\noise$,
  $\Exp \noise^2 = \sigma_\noise^2 <
  \infty$.  Thus, by the maximum principle, 
  \begin{align}
    \Exp f_i((\eta, t, \x),\vfield)^2 <
    \norm{ \pdesol_0^{(i)} }_{L^\infty}^2\Exp |\noise|^2 <
    \infty, \quad i = 1,2
    \label{eq:T2:f:uLLN:c1}
  \end{align}
  which corresponds to \eqref{eq:ulln_moment}.
  Moreover by the continuity identified in \cref{thm:pt_obs_continuous}, for all
  $\epsilon > 0$, $\vfield \in H$,
  there exists a $\delta = \delta(\vfield)$ such that
  \begin{align}
    \norm{\vfield-\tilde{\vfield}} < \delta(\vfield) 
    \implies |f(z,\vfield) - f(z,\tilde{\vfield})| 
       \le |z_\noise|\epsilon, \text{ for }i = 1,2,
    \label{eq:T2:f:uLLN:c2}
  \end{align}
  thus verifying \eqref{eq:ulln_continuity}.
  Finally observe that since $\left\{ \noise_j \right\}$ and
  $\left\{ t_j, \x_j \right\}$ are independent, so are
  $\{ \noise_j \}$ and $\{ \G_{j}(\vtrue) - \G_{j}(\vfield)\}$. 
  Furthermore, using $\Exp \noise_j = 0$, we have for any $j \geq 1$,
  $\Exp \noise_{j}  \left( \G_{j}(\vtrue) - \G_{j}(\vfield) \right)=$  
     $\Exp \noise_{j} \Exp \left( \G_{j}(\vtrue) - \G_{j}(\vfield) \right) 
      = 0$,
  thus
  \begin{align}
    \E f_i(Z, \vfield) = 0, \quad \text{for $i = 1,2$}.
    \label{eq:T2:f:uLLN:c3}
  \end{align}

  Fix any $r > 0$.  Since $B_\vfspaceb^r\left( \vtrue \right)$ is
  compact in $\vfspacea$ by the Rellich-Kondrachov Theorem
  (\cite[Corollary A.5]{robinson2001infinite}), and using
  \eqref{eq:T2:f:uLLN:c1}--\eqref{eq:T2:f:uLLN:c3}, \cref{thm:ulln}
  yields
  \begin{align}
    \limsup_{N \to \infty}\sup_{\vfield \in B_\vfspaceb^r\left( \vtrue \right)} 
      \left| \frac{2}{N}\sum_{j =1}^N T_{2,j}(\vfield)\right| 
    \notag
      &\leq
      \limsup_{N \to \infty}\sup_{\vfield \in B_\vfspaceb^r\left( \vtrue \right)} 
      \left| \frac{2}{N} \sum_{l = 0}^{\lceil N/2 \rceil -1} T_{2,2l+1}(\vfield)\right| \\ 
       &\quad +       
    \limsup_{N \to \infty}\sup_{\vfield \in B_\vfspaceb^r\left( \vtrue \right)} 
    \left| \frac{2}{N} \sum_{l = 1}^{\lfloor N/2 \rfloor} T_{2,2l}(\vfield)\right|  
    = 0.
    \label{eq:l2_decompose_term2}
  \end{align}

  \textbf{Terms involving }$\mathbf{T_{3,j}}$: 
  Here we begin by observing that, since the observations $(t_j,\x_j)$
  are uniformly distributed on $[0,T] \times \spatdom$,
  \begin{align*}
    \Exp T_{3,j} :=
    \begin{cases}
    \frac{1}{T} 
    \norm{ \pdesol(\cdot, \vtrue, \pdesol_0^{(1)}) 
        - \pdesol(\cdot, \vfield,  \pdesol_0^{(1)}) }_{L^2\left( [0,T]\times\spatdom \right)}^2
    & \text{ if $j$ is even,}\\
    \frac{1}{T} 
    \norm{ \pdesol(\cdot, \vtrue, \pdesol_0^{(2)}) 
         - \pdesol(\cdot, \vfield, \pdesol_0^{(2)}) }_{L^2\left( [0,T]\times\spatdom \right)}^2
    & \text{ if $j$ is odd.}
    \end{cases}
  \end{align*}  
  To show the uniform convergence of these terms, denote
  $z=( z_t, z_{x_1}, z_{x_2}) \in \R^3$ and define
  \begin{align*}
    f_i(z,\vfield)= \left(\pdesol(z_t, z_{x_1}, z_{x_2},\vtrue, \pdesol_0^{(i)}) 
         - \pdesol(z_t, z_{x_1}, z_{x_2},\vfield, \pdesol_0^{(i)}) \right)^2,
  \end{align*}
  for $i = 1,2$.  Invoking the the maximum principle as in
  \eqref{eq:T2:f:uLLN:c1} we have, 
  \begin{align}
    \Exp f_i(Z,\vfield)^2 < 16\norm{ \pdesol_0^{(i)} }_{L^\infty}^4 <
    \infty, \quad i =1,2,
    \label{eq:T3:f:uLLN:c1}
  \end{align}
  where here $Z$ is distributed uniformly as
  $(t_j,\x_j)$.  Also, by \cref{thm:pt_obs_continuous}, for all
  $\epsilon > 0$ there exists a $\delta$ such that
  \begin{equation}
    \norm{\vfield-\tilde{\vfield}} < \delta(\vfield) 
    \implies |f(z,\vfield) - f(z,\tilde{\vfield})|  \le \epsilon.
    \label{eq:T3:f:uLLN:c2}
  \end{equation}
  Note that in this case the bound is independent of $z$.
  Noting once again that $B_\vfspaceb^r\left( \vtrue \right)$ is a compact 
  subset of $\vfspacea$ and that \eqref{eq:T3:f:uLLN:c1}, \eqref{eq:T3:f:uLLN:c2}
  yield the conditions 
  \eqref{eq:ulln_moment}, \eqref{eq:ulln_continuity} we find
  with \cref{thm:ulln} that
  \begin{align}
    \limsup_{N \to \infty} \sup_{\vfield \in B_\vfspaceb^r\left( \vtrue \right)} 
    &\left| \frac{1}{N} \sum_{j=1}^{N} \left( \G_j(\vtrue) - \G_j(\vfield) \right)^2 
      - \frac{1}{2T}\norm{ \pairSol(\vtrue) - \pairSol(\vfield) }_{\pairSolSp}^2  
     \right|
    \notag\\
     \leq
      &\limsup_{N \to \infty}\sup_{\vfield \in B_\vfspaceb^r\left( \vtrue \right)} 
      \left| \frac{1}{N} 
       \!\!\!\!\!\sum_{l = 0}^{\;\;\lceil N/2 \rceil -1} \!\!\!\!\!\!\! T_{3,2l+1}(\vfield)
        - \frac{\Exp f_1(Z, \vfield)}{2} \right|  
        \notag\\
       &+       
    \limsup_{N \to \infty}\sup_{\vfield \in B_\vfspaceb^r\left( \vtrue \right)} 
    \left| \frac{1}{N} \!\!\sum_{l = 1}^{\lfloor N/2 \rfloor} \!\!T_{3,2l}(\vfield)
       - \frac{\Exp f_2(Z, \vfield)}{2} \right|  
    =\,0.
        \label{eq:l2_decompose_term3}
  \end{align}

  Referring back to \eqref{eq:uLLN:app:exp} and assembling the three estimates
  \eqref{eq:l2_decompose_term1}, \eqref{eq:l2_decompose_term2}, and
  \eqref{eq:l2_decompose_term3}, we arrive at \eqref{eq:l2_decompose_unif}.
  The proof is complete.
\end{proof}

\section{Identification of the Scalar Field}
\label{sec:C:true:scalar}

In this section, we show that the Bayesian posterior measure for $N$
point observations $\mu_{\data}^N$ converges to background flows that
closely match the true scalar field $\pdesol(\vtrue)$.  The idea is to
use the decomposition of the log-likelihood given in
\cref{thm:l2_decompose_unif} along with the assumptions
\eqref{eq:strng:spt:cond}, \eqref{eq:assum:prior_tail_2} to gain
control of tail events.

The main result is as follows
\begin{Prop}[Identification of true $\pdesol$]
  \label{thm:consistency}
  Take $\left\{ (t_j,\x_j) \right\}$, $\vtrue$, $\G_j$, and
  $\left\{ \data_j \right\}$ to be the observation points, the `true
  vector field', the forward map, and the data, respectively that
  are defined as in and satisfy the conditions of
  \cref{thm:post:conv}. Let $\mu_\data^N$ be the associated the
  posterior measures for $N$ observations given by
  \eqref{eq:rand:post:measure}, where we assume that the conditions
  \eqref{eq:strng:spt:cond}, \eqref{eq:assum:prior_tail_2} are 
  enforced.  Then, for any $\delta > 0$,
  \begin{align}
    \mu_\data^N \left(\Vdel\right) \to 1, \quad \text{as $N \to \infty$},
    \label{eq:dirac}
  \end{align}
  on a set of full measure, 
  where, cf. \eqref{eq:forward:th:ball},
  \begin{align}
    \Vdel = \left\{ \vfield \in \vfspacea 
      : \norm{ \pairSol(\vtrue) - \pairSol(\vfield) }_{\pairSolSp} < \delta \right\}.
    \label{eq:close:pf:con}
  \end{align}
\end{Prop}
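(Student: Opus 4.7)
The plan is to bound $\mu_\data^N(\Vdel^c)$ and show it decays to zero almost surely by splitting the complement as $\Vdel^c = \bigl(\Vdel^c \cap B_\vfspaceb^R(\vtrue)\bigr) \cup \bigl(B_\vfspaceb^R(\vtrue)\bigr)^c$ for a suitably large radius $R$. On the bulk part I would use the uniform asymptotics for the log-likelihood from \cref{thm:l2_decompose_unif}; the tail part is absorbed directly by the prior tail condition \eqref{eq:assum:prior_tail_3} after recentering at $\vtrue$.

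To obtain a lower bound on the normalization $Z_\data^N$, I first exhibit a set of positive $\mu_0$-mass on which the log-likelihood is small. By continuity of $\pairSol:\vfspace \to \pairSolSp$ (\cref{thm:S_continuous_pair}) there exists $\rho > 0$ with $B_\vfspace^\rho(\vtrue) \subset \Vdelt$. The Poincar\'e-type inequality of \cref{rmk:PC:Supp} yields $B_\vfspaceb^{\rho/C}(\vtrue) \subset B_\vfspace^\rho(\vtrue)$, and \eqref{eq:strng:spt:cond} guarantees $\mu_0(A) > 0$ for $A := B_\vfspaceb^{\rho/C}(\vtrue)$. Choosing $R > \norm{\vtrue}_\vfspaceb + \rho/C$ places $A$ inside $B_\vfspaceb^R(\vtrue)$, so \cref{thm:l2_decompose_unif} applied on $B_\vfspaceb^R(\vtrue)$ implies that for any $\epsilon_0 > 0$, almost surely there is $N_0$ such that for $N \geq N_0$,
\begin{align*}
Z_\data^N \,\geq\, \int_A \exp\!\left(-\tfrac{1}{2\sigma_\noise^2} \sum_{j=1}^N (\data_j - \G_j(\vfield))^2\right) \mu_0(d\vfield) \,\geq\, \mu_0(A)\, \exp\!\left(-\tfrac{N}{2\sigma_\noise^2}\bigl(\sigma_\noise^2 + \tfrac{\delta^2}{8T} + \epsilon_0\bigr)\right),
\end{align*}
using $\norm{\pairSol(\vtrue)-\pairSol(\vfield)}_\pairSolSp^2 < \delta^2/4$ on $A \subset \Vdelt$.

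Dually, on $\Vdel^c \cap B_\vfspaceb^R(\vtrue)$ we have $\norm{\pairSol(\vtrue)-\pairSol(\vfield)}_\pairSolSp^2 \geq \delta^2$, so a second application of \cref{thm:l2_decompose_unif} bounds the normalized log-likelihood uniformly from below by $\tfrac{1}{2\sigma_\noise^2}\bigl(\sigma_\noise^2 + \delta^2/(2T) - \epsilon_0\bigr)$ for $N$ large; the integral of $\exp(-\tfrac{1}{2\sigma_\noise^2}\sum_j(\data_j-\G_j(\vfield))^2)$ over this set is therefore at most $\exp\!\left(-\tfrac{N}{2\sigma_\noise^2}\bigl(\sigma_\noise^2 + \delta^2/(2T) - \epsilon_0\bigr)\right)$. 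Dividing by the lower bound on $Z_\data^N$ and selecting $\epsilon_0 = \delta^2/(16T)$ produces exponential decay in $N$, so $\mu_\data^N\bigl(\Vdel^c \cap B_\vfspaceb^R(\vtrue)\bigr) \to 0$ almost surely.

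For the tail, given $\epsilon > 0$, the inclusion $\bigl(B_\vfspaceb^{R+\norm{\vtrue}_\vfspaceb}(\vtrue)\bigr)^c \subset \bigl(B_\vfspaceb^R(\mathbf{0})\bigr)^c$ together with \eqref{eq:assum:prior_tail_3} allows me to enlarge $R$ further so that $\sup_N \mu_\data^N\bigl((B_\vfspaceb^R(\vtrue))^c\bigr) < \epsilon$ on a set of full measure. Combining this with the bulk estimate gives $\limsup_N \mu_\data^N(\Vdel^c) \leq \epsilon$ almost surely; intersecting the resulting full-measure events along a sequence $\epsilon_k \downarrow 0$ completes the proof. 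I expect the main obstacle to be coordinating the three ambient spaces — the $\vfspaceb$-support of $\mu_0$, the $\vfspaceb$-compactness required by \cref{thm:l2_decompose_unif}, and the $\pairSolSp$-distance defining $\Vdel$ — which must be bridged using \cref{thm:S_continuous_pair}, the Rellich-Kondrachov theorem, and the Poincar\'e inequality. The prior tail assumption \eqref{eq:assum:prior_tail_2} is indispensable here: without it, mass of $\mu_\data^N$ could escape to large $\vfspaceb$-norms where the uniform LLN provides no control.
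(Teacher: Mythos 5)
Your proposal is correct and follows essentially the same route as the paper's proof: the same bulk/tail decomposition via a $\vfspaceb$-ball around $\vtrue$, the uniform LLN of \cref{thm:l2_decompose_unif} on the compact ball for the two-sided exponential bounds, continuity of $\pairSol$ together with the Poincar\'e inequality and the support condition \eqref{eq:strng:spt:cond} for the lower bound on the normalizer, and the recentered tail condition \eqref{eq:assum:prior_tail_3} for the remainder. The one bookkeeping point to make explicit is that the radius $R$ produced by the tail condition depends on the sample $\omega$, so the uniform-LLN events must be intersected over a countable family of radii (e.g.\ integer $r$, rounding $R$ up), exactly as done in the paper's choice of $\tilde{\Omega}$.
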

\begin{Rmk}
Note that \cref{thm:Sinv_continuous2} with $\vfield_0 = \vtrue$ also has implications for $\Vdel$. Indeed, these two characterizations will be combined in \cref{sec:cons:conclusion} to prove the main result.
\end{Rmk}

Before turning directly to the proof of \cref{thm:consistency} we
first establish a lemma that derives some simple but useful consequences
of the assumptions \eqref{eq:strng:spt:cond},
\eqref{eq:assum:prior_tail_2}.  We recycle this lemma
again for later use in \cref{sec:cons:conclusion}.
\begin{Lem}
  \label{thm:prior_radius}
  Suppose that $\vtrue \in V$, and that $\mu_0$ satisfies
  \eqref{eq:strng:spt:cond}.  Define the measures $\mu^N_\data$ as in
  (\ref{eq:rand:post:measure}) and assume that the condition
  \eqref{eq:assum:prior_tail_2} is maintained.\footnote{See also
    \eqref{eq:assum:prior_tail_1} in \cref{rmk:prior:const}.} Then, on
  a set $\tilde{\Omega}$ of full measure, for any
  $\delta,\epsilon > 0$, there exists an
  $R = R(\delta, \epsilon, \omega) > 0$ (but independent of $N$) so that both
  \begin{equation}
    \mu_0\left( \Vdel \cap B_\vfspaceb^{R}\left( \vtrue \right) \right)>0 
    \quad \text{ and } \quad
    \mu_\data^N\left( \left( B_\vfspaceb^R\left( \vtrue \right) \right)^c \right) < \epsilon,
    \label{eq:prior_radius}
  \end{equation}
  for every $N \geq 1$.
\end{Lem}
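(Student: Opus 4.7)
The plan is to choose $R$ large enough to simultaneously satisfy both inequalities in \eqref{eq:prior_radius}. The first bound ensures that $\Vdel$ contains a positive-$\mu_0$-measure subset of $B_\vfspaceb^R(\vtrue)$, which I will obtain from continuity of the paired solution map $\pairSol$ combined with the prior support assumption \eqref{eq:strng:spt:cond}. The second is a uniform tail bound on $\{\mu_\data^N\}_{N \geq 1}$, which I will read off from the equivalent tail characterization \eqref{eq:assum:prior_tail_3} established in \cref{rmk:prior:eq:cond}. I take $\tilde{\Omega}$ to be the full-measure set on which \eqref{eq:assum:prior_tail_2}, equivalently \eqref{eq:assum:prior_tail_3}, holds; note that this set is independent of $(\delta,\epsilon)$.

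For the first bound, continuity of $\pairSol : \vfspacea \to \pairSolSp$ at $\vtrue$ (see \cref{thm:S_continuous_pair}) produces some $r_H = r_H(\delta) > 0$ such that $B_\vfspacea^{r_H}(\vtrue) \subset \Vdel$. The Poincar\'e-type inequality recorded in \cref{rmk:PC:Supp}, namely $\norm{\cdot}_\vfspacea \leq C\norm{\cdot}_\vfspaceb$, then yields the nested inclusion $B_\vfspaceb^{r_H/C}(\vtrue) \subset B_\vfspacea^{r_H}(\vtrue) \subset \Vdel$. Setting $r_0 = r_H/C$, the support assumption \eqref{eq:strng:spt:cond} guarantees $\mu_0\bigl(B_\vfspaceb^{r_0}(\vtrue)\bigr) > 0$, so for any $R \geq r_0$ we have $\mu_0\bigl(\Vdel \cap B_\vfspaceb^R(\vtrue)\bigr) \geq \mu_0\bigl(B_\vfspaceb^{r_0}(\vtrue)\bigr) > 0$, as required.

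For the second bound, I fix $\omega \in \tilde{\Omega}$ and apply \eqref{eq:assum:prior_tail_3}: there exists $R_0 = R_0(\epsilon,\omega) > 0$, independent of $N$, with $\sup_N \mu_\data^N\bigl((B_\vfspaceb^{R_0}(0))^c\bigr) < \epsilon$. The triangle inequality gives the inclusion $B_\vfspaceb^{R_0}(0) \subset B_\vfspaceb^{R_0 + \norm{\vtrue}_\vfspaceb}(\vtrue)$, and taking complements yields $\sup_N \mu_\data^N\bigl((B_\vfspaceb^{R_0 + \norm{\vtrue}_\vfspaceb}(\vtrue))^c\bigr) < \epsilon$. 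Finally, choosing $R = \max\bigl(r_0,\, R_0 + \norm{\vtrue}_\vfspaceb\bigr)$ satisfies both conclusions in \eqref{eq:prior_radius} simultaneously. No step is particularly delicate; the only substantive moves beyond bookkeeping are invoking the continuity of $\pairSol$ at $\vtrue$ to relate $V$-balls to $\Vdel$, and recentering the $V$-tail bound from the origin onto $\vtrue$, after which the choice of $R$ is just the maximum of the two candidate radii.
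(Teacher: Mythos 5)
Your proposal is correct and follows essentially the same route as the paper: continuity of $\pairSol$ together with the support condition \eqref{eq:strng:spt:cond} gives the first bound, and the equivalent tail condition \eqref{eq:assum:prior_tail_3} recentered at $\vtrue$ via $R_0 + \norm{\vtrue}_\vfspaceb$ gives the second. Your explicit use of the Poincar\'e inequality to pass from an $\vfspacea$-ball to a $\vfspaceb$-ball and the final $\max$ of the two radii are just slightly more detailed bookkeeping than the paper's choice $R = \tilde{R} + \norm{\vtrue}_\vfspaceb$ with $\tilde{R} > r$.
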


\begin{proof}
  Let $\delta > 0$ and $\epsilon > 0$.  By
  \cref{thm:S_continuous_pair}, there exists an $r>0$ such that
  $B_\vfspaceb^{r}\left( \vtrue \right) \subset \Vdel$.  Thus for any
  $R > r$, we observe that
  \begin{align*}
    \mu_0\left( \Vdel \cap B_\vfspaceb^{R}\left( \vtrue \right) \right) 
    \ge \mu_0\left( \Vdel \cap B_\vfspaceb^{r}\left( \vtrue \right) \right) 
    = \mu_0\left( B_\vfspaceb^{r}\left( \vtrue \right) \right) 
    > 0
  \end{align*}
  by \eqref{eq:strng:spt:cond}.

  To establish the other condition in \eqref{eq:prior_radius}, use \cref{rmk:prior:eq:cond} to choose 
  $\tilde{R} > r $ such that
  \begin{align*}
      \mu_\data^N\left( \left( B_\vfspaceb^{\tilde{R}}\left(\mathbf{0}\right)
        \right)^c \right) < \epsilon.
  \end{align*}
  Then selecting $R = \tilde{R} + \norm{\vtrue}_\vfspaceb$ ensures that $R > r$,
  maintaining the first condition in \eqref{eq:prior_radius}, and 
  further guaranteeing that
  $( B_\vfspaceb^{R}\left(\vtrue\right) )^c \subset
  ( B_\vfspaceb^{\tilde{R}}\left(\mathbf{0}\right) )^c$,
  and thus
  $\mu_\data^N\left( \left( B_\vfspaceb^{R}\left(\vtrue\right)
    \right)^c \right) < \epsilon$,
  as desired for the second condition in \eqref{eq:prior_radius}. The 
  proof is now complete.
\end{proof}

With \cref{thm:prior_radius} in hand we now turn to the proof of the
main result of this section.

\begin{proof}[Proof of \cref{thm:consistency}]
  We begin by specifying an event on which
  \eqref{eq:dirac} will be established.  Take
  \begin{align*}
    \tilde{\Omega} = \bigcap_{n =1}^\infty 
    \left\{ \omega \in \Omega: 
    \eqref{eq:l2_decompose_unif} \text{ holds for } r = n \right\}.
  \end{align*}
  According to \cref{thm:l2_decompose_unif} this is a set of full measure.
  Fix any $\omega \in \tilde{\Omega}$.  All of the constants and
  statements that follow will implicitly depend on this sample $\omega$. 

  Take arbitrary $\delta, \epsilon > 0$.  As in
  \cref{thm:prior_radius}, select $R > 0$ so that both
  \begin{align*}
    \mu_0\left( \Vdelt \cap B_\vfspaceb^{R}\left(\vtrue\right) \right)>0
    \quad \text{ and } \quad
    \mu_\data^N\left( \left( B_\vfspaceb^R\left(\vtrue\right) \right)^c \right) 
    < \frac{\epsilon}{2}.
  \end{align*}
  For these values of $R$ and $\delta$, we invoke
  \cref{thm:l2_decompose_unif} and infer that there exists an $N_1>0$
  such that for all $N \geq N_1$ and every
  $\vfield \in B_\vfspaceb^R\left(\vtrue\right)$,
  \begin{align*}
    \left| \frac{1}{N} \sum_{j=1}^{N} 
       \left( 
         \data_{j} - \G_j\left(\vfield\right) \right)^2 
       - \left( \sigma_\noise^2 
         + \frac{1}{2T}\norm{ \pairSol(\vtrue) - \pairSol(\vfield) }_{\pairSolSp}^2 
       \right) \right| < \frac{\delta^2}{8T}.
  \end{align*}
  Then for every
  $\vfield \in \Vdelt \cap B_\vfspaceb^R\left(\vtrue\right)$ and
  $N \geq N_1$, we have
  \begin{align*}
   \frac{1}{N} \sum_{j=1}^{N} \left( \data_{j} - \G_j\left(\vfield\right) \right)^2 
    &< \sigma_\noise^2 
      + \frac{1}{2T}\norm{ \pairSol(\vtrue) - \pairSol(\vfield) }_{\pairSolSp}^2 
      + \frac{\delta^2}{8T} 
      < \sigma_\noise^2 + \frac{\delta^2}{4T}.
  \end{align*}
  Similarly, for every
  $\vfield \in \Vdel^c \cap B_\vfspaceb^R\left(\vtrue\right)$ and
  $N \geq N_1$, we have
  \begin{align*}
    \frac{1}{N} \sum_{j=1}^{N} 
    \left( \data_{j} - \G_j\left(\vfield\right) \right)^2 
    &> \sigma_\noise^2 
    + \frac{1}{2T}\norm{ \pairSol(\vtrue) - \pairSol(\vfield) }_{\pairSolSp}^2 
    - \frac{\delta^2}{8T}
    \ge \sigma_\noise^2 + \frac{3\delta^2}{8T}.
  \end{align*}
  Now, leveraging
  $\mu_0\left(\Vdelt \cap B_\vfspaceb^R\left(\vtrue\right) \right)>0$,
  we choose $N_2$ such that
  \begin{align*}
     \frac{ 1 }{ \mu_0\left(\Vdelt\cap B_\vfspaceb^R\left(\vtrue\right)\right) } 
     \exp \left[ -\frac{\delta^2}{16T\sigma_\noise^2}N_2 \right] < \frac{\epsilon}{2}.
  \end{align*}
  Then, for all $N \geq \max\{N_1, N_2\}$, we have (cf. \eqref{eq:rand:post:measure})
  \begin{align*}
      \mu_\data^N(\Vdel^c\cap B_\vfspaceb^R\left(\vtrue\right)) 
      &\le \frac{ {\displaystyle \int_{\Vdel^c\cap B_\vfspaceb^R\left(\vtrue\right)} }
        \exp \left[ -\frac{1}{2\sigma_\noise^2} 
            \sum\limits_{j=1}^N \left(\data_j-\G_j\left(\vfield\right) \right)^2\right] 
       \mu_0(d\vfield) }
        {{\displaystyle \int_{\Vdelt\cap B_\vfspaceb^R\left(\vtrue\right)}} \exp 
        \left[ -\frac{1}{2\sigma_\noise^2} 
               \sum\limits_{j=1}^N \left(\data_j-\G_j\left(\vfield\right) \right)^2\right] 
        \mu_0(d\vfield) } \\
      &< \frac{ {\displaystyle \int_{\Vdel^c\cap B_\vfspaceb^R\left(\vtrue\right)}} 
          \exp \left[ -\frac{1}{2\sigma_\noise^2} 
            N \left( \sigma_\noise^2 
                   + \frac{3\delta^2}{8T} \right) \right] \mu_0(d\vfield) }
         {{\displaystyle \int_{\Vdelt\cap B_\vfspaceb^R\left(\vtrue\right)} }
            \exp \left[ -\frac{1}{2\sigma_\noise^2} 
           N\left( \sigma_\noise^2 + \frac{\delta^2}{4T} \right)\right] 
          \mu_0(d\vfield) } \\
      &= \exp \left[ -\frac{N\delta^2}{16T\sigma_\noise^2} \right] 
        \frac{ \mu_0\left(\Vdel^c\cap B_\vfspaceb^R\left(\vtrue\right)\right) }
        { \mu_0\left(\Vdelt\cap B_\vfspaceb^R\left(\vtrue\right)\right) } 
      \le \frac{\exp \left[ -\frac{N\delta^2}{16T\sigma_\noise^2} \right] 
          }{ \mu_0\left(\Vdelt\cap B_\vfspaceb^R\left(\vtrue\right)\right) } 
      < \frac{\epsilon}{2}.
  \end{align*}
  Then 
  \begin{align*}
    \mu_\data^N\left(\Vdel^c\right) 
    \le \mu_\data^N\left(\Vdel^c\cap B_\vfspaceb^R\left(\vtrue\right)\right) 
       + \mu_\data^N\left( \left( B_\vfspaceb^R\left(\vtrue\right) \right)^c \right) 
       < \frac{\epsilon}{2} + \frac{\epsilon}{2} = \epsilon.
  \end{align*}
  Thus, since $\epsilon$ and $\omega \in \tilde{\Omega}$ are arbitrary
  we conclude that for any $\delta > 0$,
  $\mu_\data^N \left(\Vdel^c\right) \to 0$ as $N \to \infty$ a.s. The desired
  result \eqref{eq:dirac} follows, completing the proof of 
  \cref{thm:consistency}.
\end{proof}

\section{Convergence of Posterior Measures to the True Vector Field}
\label{sec:cons:conclusion}

We now combine the continuity of the inverse map
(\cref{thm:Sinv_continuous}) and the convergence of the posterior
measure to $\pdesol(\vtrue)$ 
(\cref{thm:consistency}) to finally prove our main result
\cref{thm:post:conv}, i.e., to show that as the number of
observations goes to infinity, the posterior converges weakly to a
Dirac measure centered at $\vtrue$.

\begin{proof}[Proof of \cref{thm:post:conv}]
  Let $\epsilon > 0$. Let $A$ be an open subset of $\vfspacea$. To
  show weak convergence, according to Portmanteau's Theorem (see e.g. \cite[Section 2]{billingsley2013convergence}) we need to show
  \begin{align*}
    \liminf_{N \to \infty} \mu_\data^N(A) \ge \delta_{\vtrue}(A).
  \end{align*}
  If $\vtrue \not\in A$, then $\delta_{\vtrue}(A) = 0$ so the result
  is trivial in this case.

  Now consider $\vtrue \in A$ and fix any sample $\omega$ on the set
  $\tilde{\Omega}$ of full measure for which (\ref{eq:dirac}) in
  \cref{thm:consistency} holds. Fix any $\epsilon >0$.  As guaranteed by
  \cref{thm:prior_radius}, we can choose $R>0$ so that
  \begin{align*}
    \mu_\data^N\left( \left( B_\vfspaceb^R(\vtrue) \right)^c \right) < \frac{\epsilon}{2}.
  \end{align*}
  Since $A$ is open there exists an $\gamma > 0$ such that
  $B_\vfspacea^{\gamma}(\vtrue) \subset A$. Then, by
  \cref{thm:Sinv_continuous2}, there exists a $\delta > 0$ such that
  $\Vdel \cap B_\vfspaceb^R(\vtrue) \subset
  B_\vfspacea^{\gamma}(\vtrue) \subset A$. As such
  \begin{align*}
    \mu_\data^N(A) 
    \ge \mu_\data^N\left( B_\vfspacea^\gamma(\vtrue) \right) 
    \ge \mu_\data^N\left( \Vdel \cap B_\vfspaceb^R(\vtrue) \right) 
    &\ge \mu_\data^N\left( \Vdel \right) 
         - \mu_\data^N\left( \left( B_\vfspaceb^R(\vtrue) \right)^c \right) 
    \ge \mu_\data^N\left( \Vdel \right) - \frac{\epsilon}{2}.
  \end{align*}
  However, \cref{thm:consistency} ensures that there exists an
  $N^\star$ such that for all $N > N^\star$,
  \begin{align*}
    \mu_\data^N\left( \Vdel \right) > 1 - \frac{\epsilon}{2}.
  \end{align*}
  Therefore for all $N > N^\star$,
  \begin{align*}
    \mu_\data^N(A) 
    \ge \mu_\data^N\left( \Vdel \right) - \frac{\epsilon}{2} 
    > 1 - \epsilon = \delta_{\vtrue}(A) - \epsilon.
  \end{align*}
  Since $\epsilon > 0$ and $\omega$ were arbitrary to begin with,
  $\liminf_{N \to \infty} \mu_\data^N(A) \ge \delta_{\vtrue}(A)$ with
  probability $1$ completing the proof of \cref{thm:post:conv}.
\end{proof}

\appendix 

\section{Energy Estimates for Continuity of the Solution Map}\label{sec:energyest}

In this appendix we provide some of the {\em a priori} estimates leading to 
\cref{def:adr_weak}.  As noted above, a suitable Galerkin approximation
of (\ref{eq:adr}) can be implemented to provide rigorous justification
for the forthcoming formal manipulations.

Let us begin with the $L^2$-based estimates.  Since $\vfield$ is divergence
free, we have that
$\frac{d}{dt} \| \pdesol \|^2 + 2\kappa \| \nabla \pdesol\|^2 = 0$ so
that for any $T > 0$,
$\pdesol \in L^2([0,T]; H^1(\spatdom)) \cap L^\infty([0,T];
L^2(\spatdom)).$ Turning to the estimate for $\partial_t \pdesol$ we have
\begin{align}
  \| \partial_t \pdesol\|_{H^{-1}} \leq \kappa \|\pdesol\|_{H^1} + \|\vfield \cdot \nabla \pdesol\|_{H^{-1}}.
  \label{eq:der:est:H_1_1}
\end{align}
Regarding the second term on the right hand, using that $\vfield$ is 
divergence free and H\"older's inequality
\begin{align}
   \|\vfield \cdot \nabla \pdesol\|_{H^{-1}}
   = \sup_{\|\phi\|_{H^1} =1} \left| \int \vfield \cdot \nabla \pdesol \phi d\x \right|
   \leq C\| \vfield\|_{L^p} \|\pdesol\|_{L^q}
    \label{eq:der:est:H_1_2}
\end{align}
where $p^{-1} + q^{-1} = 2^{-1}$. Let us now recall the Sobolev
embedding in spatial dimension $d = 2$ which entails the bound
 \begin{align}
   \| f \|_{L^p} \leq C \| f \|_{H^{r}}
   \quad \text{ for any } r \geq 1 - \frac{2}{p}, \text{ with } 2 \leq p < \infty,
   \label{eq:sob:embedding}
\end{align}
for any sufficiently smooth $f$ and where the constant $C$ depends
only on the size of the periodic box, $p$, and $r$.  Thus, with our
assumption that $\vfield \in H^s$ for some $s > 0$ it now follows from
\eqref{eq:der:est:H_1_1}, \eqref{eq:der:est:H_1_2}, and
\eqref{eq:sob:embedding} that
$\partial_t \pdesol \in L^2([0,T]; H^{-1})$.

Regarding the claimed continuity in $L^2$ we consider any
$\pdesol^{(1)}, \pdesol^{(2)}$ solving (\ref{eq:adr}) and
corresponding to divergence free $\vfield^{(1)}, \vfield^{(2)}$.
Taking $\psi = \pdesol^{(2)} - \pdesol^{(1)}$ and
$\bfU = \vfield^{(2)} - \vfield^{(1)}$ we have
\begin{align}
 \partial_t \psi
      = \kappa \Delta \psi
        - \bfU \cdot \nabla \pdesol^{(2)} - \vfield^{(1)} \cdot \nabla \psi.
 \label{eq:diff:AD:eq} 
\end{align}
Multiplying \eqref{eq:diff:AD:eq} by $\psi$, integrating and using that
are both $\vfield^{(1)}, \bfU$ are divergence free, we obtain
\begin{align}
  \frac{1}{2}\frac{d}{dt} \| \psi \|^2 + \kappa \| \nabla \psi\|^2
   =  \int \bfU \cdot \nabla \psi \pdesol^{(2)} d\x.
  \label{eq:s:en:diff}
\end{align}
Now, H\"older's inequality yields
\begin{align*}
  \left|\int \bfU \cdot \nabla \psi \pdesol^{(2)} d\x  \right|
  \leq \|\bfU\|_{L^p} \| \nabla \psi\| \|\pdesol^{(2)}\|_{L^q}
\end{align*}
which holds for any $2 \leq p, q \leq \infty$ such that
$p^{-1} + q^{-1} = 2^{-1}$.  Observe that, by choosing
$2 < q < \infty$ sufficiently large, obtain a $p = \frac{2 q}{q -2}$
such that, according to (\ref{eq:sob:embedding})
$\|\bfU\|_{L^p} \leq C \|\bfU\|_{H^s}$ where $s > 0$ is the given
degree of regularity for $\vfield^{(1)}, \vfield^{(2)}$.  With this
observation, another application of (\ref{eq:sob:embedding}), this time
for $\|\pdesol^{(2)}\|_{L^q}$, and Young's inequality we have
\begin{align*}
 \left|\int \bfU \cdot \nabla \psi \pdesol^{(2)} d\x  \right|
  \leq C\|\bfU\|_{H^s}  \| \psi \|_{H^1} \|\pdesol^{(2)}\|_{H^1}
  \leq \frac{\kappa}{2} \| \psi \|_{H^1}^2 + C\|\bfU\|_{H^s}^2  \|\pdesol^{(2)}\|_{H^1}^2.
\end{align*}
Combining this bound with (\ref{eq:s:en:diff}) we find, for any $T > 0$,
\begin{align*}
  \sup_{t \in [0,T]} \|\pdesol^{(1)}(t) - \pdesol^{(2)}(t)\|^2 \leq 
      \|  \pdesol^{(1)}_0 - \pdesol^{(2)}_0 \|^2 + 
  C\|\bfU\|_{H^s}^2  \int_0^T\|\pdesol^{(2)}(t')\|_{H^1}^2dt'
\end{align*}
from which the desired continuity in the $L^2$ case now follows.

Before proceeding to the higher order, $s > 0$, estimates, let us introduce
further notations and recall some fundamental inequalities.  For any
$r \geq 0$, we take $\Lambda^r := (- \Delta)^{r/2}$ acting on elements
in $H^r(\spatdom)$.  In other words
\begin{align*}
  \Lambda^r f = \sum_{\kbf \in \Z^2} \left( 2\pi \right)^{r} \| \kbf \|^{r} c_{\kbf} e^{2 \pi i \kbf \cdot \x }
  \quad \text{ for any } f = \sum_{\kbf \in \Z^2} c_{\kbf} e^{2 \pi i \kbf \cdot \x }
\end{align*}
and we have $\| \Lambda^r f \| = \|f\|_{H^r}$.  We have the following useful
interpolation inequality 
\begin{align}
  \| \Lambda^r f \| \leq 
  \| \Lambda^{\gamma_l} f\|^{\frac{\gamma_u - r}{\gamma_u - \gamma_l}}
  \| \Lambda^{\gamma_u} f\|^{\frac{r - \gamma_l}{\gamma_u - \gamma_l}}
  \label{eq:Intrp:Ineq}
\end{align}
valid for any $0 \leq \gamma_l < r < \gamma_u$; see
e.g.~\cite{robinson2001infinite}.  We will also make use of the
fractional Leibniz inequality or Kato-Ponce inequality:
\begin{align}
  \| \Lambda^{r} (fg) \|_{L^m} \leq 
  C ( \| \Lambda^{r} f\|_{L^{p_1}} \| g \|_{L^{q_1}} +
  \| f\|_{L^{p_2}} \| \Lambda^{r}  g \|_{L^{q_2}} 
  )
  \label{eq:Kato:Ponce}
\end{align}
which is valid for any $r \geq 0$, $1 < m < \infty$ and
$1 < p_i, q_i \leq \infty$ with $m^{-1} = p^{-1}_j + q^{-1}_j$ for
$j = 1,2$ and where the constant $C$ is independent of any suitably
smooth $f, g$.  See \cite{grafakos2014kato, muscalu2013classical} for
further details.

With these preliminaries in hand, now suppose $\pdesol$ solves
(\ref{eq:adr}). Applying the operator $\Lambda^s$ to (\ref{eq:adr}),
multiplying by $\Lambda^s \pdesol$ and integrating over $\mathbb{T}^2$
we obtain
\begin{align}
  \frac{1}{2}\frac{d}{dt} \| \Lambda^s \pdesol\|^2 
  + \kappa \| \Lambda^{s+1} \pdesol \|^2 
  = \int \Lambda^s(\vfield\cdot \nabla \pdesol) \Lambda^s \pdesol d\x.
       \label{eq:high:order:1}
\end{align}
With H\"older's inequality and \eqref{eq:Kato:Ponce} we find
\begin{align}
  \left| \int \Lambda^s(\vfield\cdot \nabla \pdesol) 
          \Lambda^s \pdesol d\x\right|
  \leq C \|\Lambda^s \pdesol \|_{L^p} 
      (\|\Lambda^s\vfield\| \|\Lambda^1 \pdesol\|_{L^q} + 
       \|\vfield\|_{L^q} \|\Lambda^{s+1} \pdesol\|),
  \label{eq:Gen:interp:1}
\end{align}
which holds for any $1 < p, q < \infty$ such
that  $1-\frac{1}{p} = \frac{1}{2} + \frac{1}{q}$.  Noting
that $q = \frac{2p}{2p -2 - p} \to 2$ as $p \to \infty$, using
the Sobolev embedding (\ref{eq:sob:embedding}) 
and then the interpolation inequality \eqref{eq:Intrp:Ineq}, 
we thus find, for some $0 < s' < s \wedge 1$,
\begin{align}
  \left| \int \Lambda^s(\vfield\cdot \nabla \pdesol) 
          \Lambda^s \pdesol d\x\right|
  \leq& C \|\Lambda^{1+s'} \pdesol \| \| \Lambda^{1+ s} \pdesol\| \|\Lambda^s\vfield\| \\
  \leq& C \|\Lambda^{1+s} \pdesol \|^{2-(s- s')} \|\Lambda^s \pdesol \|^{s - s'} 
  \|\Lambda^s\vfield\|
  \notag\\
 \leq& \kappa \|\Lambda^{1+s} \pdesol \|^{2} + 
       C\|\Lambda^s \pdesol \|^{2} 
  \|\Lambda^s\vfield\|^{\frac{2}{(s-s')}}.
       \label{eq:high:order:2}
\end{align}
Combining \eqref{eq:high:order:1}, \eqref{eq:high:order:2}
and rearranging we obtain
\begin{align*}
 \frac{d}{dt} \| \Lambda^s \pdesol\|^2 
  + \kappa \| \Lambda^{s+1} \pdesol \|^2 
  \leq        C\|\Lambda^s \pdesol \|^{2} 
     \|\Lambda^s\vfield\|^{\frac{2}{(s-s')}}.
\end{align*}
This bound and Gr\"onwall's inequality reveals 
\begin{align*}
  \sup_{t \in [0,T]} \|\pdesol(t)\|^2_{H^s} 
  \leq  \exp( T C \|\Lambda^s\vfield\|^{\frac{2}{(s-s')}}) \|\pdesol_0\|_{H^s}^2.
\end{align*}
Using this bound and integrating in time yields
\begin{align*}
  \int_0^T \| \Lambda^{s+1} \pdesol \|^2  
  \leq 2 \exp( T  \|\Lambda^s\vfield\|^{\frac{2}{(s-s')}}) \|\pdesol_0\|_{H^s}^2,
\end{align*}
which indeed shows that for any $T > 0$,
$\pdesol \in L^2([0,T];H^{s+1}(\spatdom))\cap L^\infty([0,T];$ $
H^s(\spatdom))$.
We turn next to the estimates for $\partial_t \pdesol$. 
Here analogous to (\ref{eq:der:est:H_1_1}) we just need
a suitable estimate for $\|\vfield \cdot \nabla \pdesol\|_{H^{s-1}}$.
For any $s > 0$ this amounts to 
\begin{align*}
   \|\vfield \cdot \nabla \pdesol\|_{H^{s-1}}
   := \sup_{\|\phi\|_{H^{s+1}} =1} \left| \int \vfield \cdot \nabla \pdesol \phi d\x \right|
   \leq C\| \vfield\|_{L^p} \|\pdesol\|_{L^q} \sup_{\|\phi\|_{H^{s+1}} =1} \|\nabla \phi\|_{L^r}
\end{align*}
for any $1 \leq p,q,r \leq \infty$ with $p^{-1} + q^{-1} + r^{-1} = 1$ and 
where again we have used that $\vfield$ is divergence free.  By picking 
$p = r > 2$ such that $L^p \subset H^s$ according to (\ref{eq:sob:embedding})
we finally obtain   
\begin{align*}
 \|\vfield \cdot \nabla \pdesol\|_{H^{s-1}}
  \leq C \| \vfield\|_{H^s} \|\pdesol\|_{H^{s+1}}
\end{align*}
and thus conclude that $\partial_t \pdesol \in L^2([0,T]; H^{s-1})$
for any $T > 0$.

We finally address the claimed continuity of the data to solution map
in $H^s$.  Adopting the same notations as in (\ref{eq:diff:AD:eq}), we
have
\begin{align}
  \frac{1}{2} \frac{d}{dt} \| \Lambda^s \psi\|^2
       + \kappa \| \Lambda^{s+1} \psi\|^2      
     = -\int 
        \Lambda^{s}(\bfU \cdot \nabla \pdesol^{(2)} - \vfield^{(1)} \cdot \nabla \psi)
        \Lambda^{s} \psi d\x
    := T_1 + T_2.
       \label{eq:high:order:3}
\end{align}
Regarding $T_1$, we estimate as in \eqref{eq:Gen:interp:1}, \eqref{eq:high:order:2} and find,
\begin{align}
  |T_1|
  \leq& C \|\Lambda^s \psi \|_{L^p} 
      (\|\Lambda^s \bfU\| \|\Lambda^1 \pdesol^{(2)}\|_{L^q} + 
       \|\bfU\|_{L^q} \|\Lambda^{s+1} \pdesol^{(2)}\|)
  \notag\\
  \leq& C \|\Lambda^{1+s} \psi \| \|\Lambda^s\bfU\| \|\Lambda^{s+1} \pdesol^{(2)}\|
  \leq \kappa \|\Lambda^{1+s} \psi \|^2
        + C \|\Lambda^s\bfU\|^2 \|\Lambda^{s+1} \pdesol^{(2)}\|^2
  \label{eq:Gen:interp:2}
\end{align}
For $T_2$ we proceed in precisely the same fashion as (\ref{eq:high:order:2})
and find 
\begin{align}
 |T_2| \leq \kappa \|\Lambda^{1+s} \psi \|^{2} + 
       C\|\Lambda^s \psi \|^{2} 
  \|\Lambda^s\vfield^{(1)}\|^{\frac{2}{(s-s')}}
       \label{eq:high:order:4}
\end{align}
Combining the identity \eqref{eq:high:order:3} with the estimates
\eqref{eq:Gen:interp:2}, \eqref{eq:high:order:4} and rearranging
appropriately we obtain
\begin{align*}
  \frac{d}{dt} \| \Lambda^s \psi\|^2
  \leq C\|\Lambda^s \psi \|^{2} \|\Lambda^s\vfield^{(1)}\|^{\frac{2}{(s-s')}}
      + C \|\Lambda^s\bfU\|^2 \|\Lambda^{s+1} \pdesol^{(2)}\|^2,
\end{align*}
and hence, with Gr\"onwall's inequality,
\begin{align*}
  \| \Lambda^s \psi(t)\|^2 &\leq 
     \exp(C \|\Lambda^s\vfield^{(1)}\|^{\frac{2}{(s-s')}}t) \| \psi(0)\|^2 \\
      &\quad + C \|\Lambda^s\bfU\|^2 
  \int_0^t \exp(C \|\Lambda^s\vfield^{(1)}\|^{\frac{2}{(s-s')}}(t - t')) \|\Lambda^{s+1} \pdesol^{(2)}(t')\|^2dt'.
\end{align*}
Thus given the already established {\em a priori} bounds on $\pdesol^{(2)}$ in $H^s$
and our standing assumption concerning the regularity of $\vfield^{(1)}$ we 
have
\begin{align*}
  \sup_{t \in [0,T]}\| \pdesol^{(1)}(t) - \pdesol^{(2)}(t)\|^2_{H^s}
  \leq C (\| \pdesol^{(1)}_0 - \pdesol^{(2)}_0\|^2_{H^s} 
            + \| \vfield^{(1)} - \vfield^{(2)}\|^2_{H^s}),
\end{align*}
from which the desired continuity in $H^s$ now follows.

\section*{Acknowledgments}
This work was supported in part by the National Science Foundation
under grants DMS-1313272 (NEGH), DMS-1816551 (NEGH), DMS-1522616 (JTB), and DMS-1819110 (JTB); the National Institute for
Occupational Safety and Health under grant 200-2014-59669 (JTB); and the
Simons Foundation under grant 515990 (NEGH). We would like to thank G. Didier for drawing our attention to the question of Bayesian consistency for the advection-diffusion problem. We would also like to express our appreciation to J. Foldes, S. McKinley, A. Stuart, and J. Whitehead for additional useful discussions and references.

\addcontentsline{toc}{section}{References}
\begin{footnotesize}
\bibliographystyle{plain}
\bibliography{references}
\end{footnotesize}

\vspace{.5in}
\begin{multicols}{2}
\noindent
Jeff Borggaard\\
{\footnotesize Department of Mathematics\\
Virginia Tech\\
Web: \url{https://www.math.vt.edu/people/jborggaa/}\\
Email: \href{mailto:jborggaard@vt.edu}{\nolinkurl{jborggaard@vt.edu}}} \\[.5cm]
Nathan Glatt-Holtz\\ {\footnotesize
Department of Mathematics\\
Tulane University\\
Web: \url{http://www.math.tulane.edu/~negh/}\\
Email: \href{mailto:negh@tulane.edu}{\nolinkurl{negh@tulane.edu}}} \\[.2cm]

\columnbreak

 \noindent Justin Krometis\\
{\footnotesize
Advanced Research Computing\\
Virginia Tech\\
Web: \url{https://www.arc.vt.edu/justin-krometis/}\\
Email: \href{mailto:jkrometis@vt.edu}{\nolinkurl{jkrometis@vt.edu}}} \\[.2cm]
 \end{multicols}

\end{document}